\newtheorem{Def}{Definition}
\newtheorem{Prop}{Proposition}
\newtheorem{lem}{Lemma}
\newtheorem{thm}{Theorem}
\newtheorem{cor}{Corollary}
\newtheorem{ass}{Assumption}
\newtheorem{rem}{Remark}
\newcommand{\ds}{\displaystyle}
\newcommand{\R}{\mathbb{R}}
\newcommand{\N}{\mathbb{N}}
\newcommand{\disp}{\displaystyle}
\begin{document}
\date{}
\title{The minimum value function for the Tikhonov regularization and its applications.}

\author{
   K.Ito\thanks{
  Department of Mathematics, North Carolina State
  University, Raleigh, NC.
  ({\tt kito@math.ncsu.edu })
}
   and
   T.Takeuchi\thanks{
  Center for Research in Scientific Computation, North Carolina State University, Raleigh, NC.
  ({\tt tntakeuc@ncsu.edu})}
}%

\maketitle

\begin{abstract}
The minimum value function appearing in Tikhonov regularization technique is very useful in determining the regularization parameter, both theoretically and numerically. In this paper, we discuss the properties of the minimum value function. We also propose an efficient method to determine the regularization parameter. A new criterion for the determination of the regularization parameter is also discussed.
\end{abstract}

\section{Introduction}
Let consider the generic model of inverse problems
$Kx=y$ where $x\in X$ and $y\in Y$ refer to the unknown and the data in a Banach space $X$ and $Y$, and $K$ is a nonlinear operator.
The model can represent a variety of inverse problems arising
in industrial applications including
computerized tomography, inverse scattering and image processing.

Due to the ill-poisedness of the problem, a regularization method must be applied
in order to retrieve $x$ from the noisy data $y^\delta$ and there are numerous works devoted for regularization methods.
One of the most appealing regularization techniques is
Tikhonov regularization, which has been studied from both theoretical and computational aspects by many authors.

The Tikhonov regularization takes the minimizer of $x_\alpha$
to the functional $J_\alpha$ in an admissible set $Q_{\text{ad}}\subset X$:
\begin{equation}\label{TikhonovFunctional}
x_\alpha:=\arg\inf_{x\in Q_{\text{ad}}}J_\alpha(x),
\end{equation}
where
\[
J_\alpha(x)=\varphi(x,y^\delta)+\alpha\psi(x).
\]
Here $\alpha>0$ is the regularization parameter compromising the data
fitting and fidelity term
$\varphi$ and a priori information encoded in the restoration energy functional
$\psi$ of $x$. 
Commonly used data fidelity functionals $\varphi $ include
$\|Kx-y^\delta\|^2_{L^2}$, $\|Kx-y^\delta\|_{L^1}$ and
$\int (Kx-y^\delta \log Kx)$ and regularization functionals
$\psi$ include $(Bx,x)_X$ with a bounded linear self-adjoint nonnegative operator $B$, $\|x\|_{TV}$, etc.
The set $Q_{\text{ad}}$ which describes \textit{a priori} information for the solution $x_0$ is usually set to be weakly closed convex set in $X$. For instance, $Q_{\text{ad}}=Z\cap\{x\in Z \mid \|x\|_{Z}\le M\}$ where $Z\subset X$ is the another Banach space whose topology is stronger than that of $X$ and $M>0$ is a given constant. For the detail of Tikhonov regularization,
we refer to Baumeister \cite{Baumeister-Stabsoluinveprob:87}, Engl,
Hanke and Neubauer \cite{Engl+HankeETAL-Reguinveprob:96},
Groetsch
\cite{Groetsch-Inveprobmathscie:93},
Hofmann
\cite{Hofmann-Reguapplinveill-:86}
  and references therein.

The selection of a parameter $\alpha$ is crucial for the stable solution and there is a significant amount of works for the development of methods for choosing a suitable parameter. Among others, we refer to \cite{Cheng+Yamamoto-strapriochoiregu:00,Engl+HankeETAL-Reguinveprob:96,Groetsch-theoTikhreguFred:84}. 

The minimum value function (see Definition \ref{Def_value}) appearing in Tikhonov regularization technique is very useful in determining the regularization parameter. In \cite{Tikhonov+LeonovETAL-Nonlill-prob:98} on several principles 
such as the generalized principle of discrepancy, the generalized principle of quasisolutions, the generalized principle of smoothing functional, and the principles are investigated by the calculus of the minimum value function.

The value function calculus also gives an insight to well known conventional principles such as Morozov principle. For example, Ito and Kunisch proposed in \cite{Ito+Kunisch-choireguparanonl:92} study the Morozov principle in terms of the minimum value function for nonlinear inverse problems. On the basis of the value function calculus, they propose a modified Morozov principle. As we see in section 4, other conventional principles can also be formulated in terms of the value function.

In all the principles we have mentioned, each of the regularization parameters is determined with an equation including the value function and its higher derivatives. It can be computationally expensive to solve the equation numerically. In order to reduce the computational effort, an efficient method and an algorithm should be developed.

A model function approach is proposed in \cite{Ito+Kunisch-choireguparanonl:92}, in which an approximation (a model function) to the minimum value function is constructed and use the model function for the value function in their principle. Other principles and numerical algorithms can be found in
Kunisch and Zou \cite{Kunisch+Zou-Iterchoiregupara:98}, Xie and Zou \cite{Xie+Zou-imprmodefuncmeth:02}.

In this paper, new results on the properties of the minimum value function are derived in a general set-up for problem (\ref{TikhonovFunctional}), and a new principle for a choice of a regularization parameter is proposed using the minimum value function, which strongly relates to the principle of Reginska \cite{Reginska-reguparadiscill-:96} also known as the minimum product criterion.
We also propose a model function for the value function and employ the model function approach in several conventional principles to numerically compute the regularization parameters accurately and efficiently.

The paper is organized as follows. In section 2, we give properties of the minimum value function.
A model function is proposed in section 3 and the efficiency of the model function is verified in section 4. In section 5, a new principle for the regularization parameter is given.
\section{The minimum value function and its properties}\label{sec:F}
In this section, we investigate the properties of the minimum value function in a general framework :
Consider the generic model of inverse problems
$Kx=y$ where $K$ is a nonlinear operator from a Banach space $X$ and another Banach space $Y$. We retrieve $x$ from the noisy data $y^\delta$ using Tikhonov regularization technique, i.e., $x$ is approximately obtained with the minimizer $x_\alpha$
\begin{equation}
x_\alpha:=\arg\inf_{x\in Q_{\text{ad}}}J_\alpha(x).
\end{equation}
\begin{Def}\label{Def_value} The minimal value function $F(\alpha)$ of $J_\alpha$ is the function of the parameter $\alpha$ defined as
\begin{equation}\label{valuef}
F(\alpha)=\inf_{x\in Q_{\text{ad}}}J_\alpha(x).
\end{equation}
\end{Def}
\begin{lem}\label{lem:F}
The value minimum function $F(\alpha)$ is (i) monotonically increasing and (ii) concave.
\end{lem}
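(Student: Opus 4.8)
The plan is to exploit the fact that $F(\alpha)$ is an infimum of a family of functions that are \emph{affine} in $\alpha$. For each fixed admissible $x \in Q_{\text{ad}}$, the map $\alpha \mapsto J_\alpha(x) = \varphi(x,y^\delta) + \alpha\psi(x)$ is an affine function of $\alpha$ with slope $\psi(x) \ge 0$ (assuming, as is standard for the restoration energy, that $\psi \ge 0$; if $\psi$ is only bounded below this can be absorbed). Thus $F(\alpha) = \inf_{x} J_\alpha(x)$ is a pointwise infimum of a family of affine, nondecreasing functions of $\alpha$.

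For monotonicity (i), I would argue directly: if $0 < \alpha_1 \le \alpha_2$, then for every $x \in Q_{\text{ad}}$ we have $J_{\alpha_1}(x) = \varphi(x,y^\delta) + \alpha_1\psi(x) \le \varphi(x,y^\delta) + \alpha_2\psi(x) = J_{\alpha_2}(x)$ since $\psi(x)\ge 0$. Taking the infimum over $x$ on both sides preserves the inequality, giving $F(\alpha_1) \le F(\alpha_2)$. No regularity or attainment of the minimizer is needed here.

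For concavity (ii), I would use the standard fact that the infimum of any family of concave (in particular affine) functions is concave. Concretely, fix $\alpha_1, \alpha_2 > 0$ and $t \in [0,1]$, and set $\alpha_t = t\alpha_1 + (1-t)\alpha_2$. For each $x \in Q_{\text{ad}}$, affineness in $\alpha$ gives the identity $J_{\alpha_t}(x) = t\,J_{\alpha_1}(x) + (1-t)\,J_{\alpha_2}(x) \ge t\,F(\alpha_1) + (1-t)\,F(\alpha_2)$, where the inequality uses $J_{\alpha_i}(x) \ge F(\alpha_i)$. Taking the infimum over $x \in Q_{\text{ad}}$ on the left yields $F(\alpha_t) \ge t\,F(\alpha_1) + (1-t)\,F(\alpha_2)$, which is concavity.

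I do not anticipate a genuine obstacle: both parts follow from elementary properties of infima and the affine dependence on $\alpha$, and in particular we never need existence or uniqueness of the minimizer $x_\alpha$, nor any continuity/convexity of $\varphi$ or $\psi$ in $x$, nor weak closedness of $Q_{\text{ad}}$. The only mild point worth stating explicitly is the sign convention $\psi \ge 0$ needed for (i); concavity (ii) holds regardless. If one wanted to additionally remark that $F$ is continuous on $(0,\infty)$, that would follow for free since a finite concave function on an open interval is automatically continuous, but that is beyond what the lemma asks.
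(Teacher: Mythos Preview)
Your proof is correct and follows essentially the same approach as the paper: both establish monotonicity by comparing $J_{\alpha_1}(x)\le J_{\alpha_2}(x)$ pointwise (using $\psi\ge 0$) and then taking the infimum, and both establish concavity by writing $J_{\alpha_t}(x)$ as a convex combination and bounding each piece below by the corresponding $F(\alpha_i)$ before taking the infimum. Your added observations---that neither step requires attainment of the minimizer, and that continuity on $(0,\infty)$ is an immediate consequence of concavity---are exactly the remarks the paper records immediately after the lemma.
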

\begin{proof}
(i): Let $0<\hat{\alpha}<{\alpha}$ be given. For any $x\in Q_{\text{ad}}$,
\[
F(\hat{\alpha})\le J_{\hat{\alpha}}(x) =\varphi(x,y^\delta)+\hat{\alpha}\,\psi(x)
\le \varphi(x,y^\delta)+{\alpha}\,\psi(x).
\]
Taking the infimum with respect to $x$ yields $F(\hat{\alpha})\le F({\alpha})$.\\
(ii): Let $\alpha_1>0$ and $\alpha_2>0$ be given. Set $\gamma = (1-t)\alpha_1+t\alpha_2$ for $0\le t\le 1$, then
\begin{eqnarray*}
F((1-t)\alpha_1+t\alpha_2) &=& \inf_{x}J_{\gamma}(x)
=\inf_{x}\left(\varphi(x,y^\delta)+ ((1-t)\alpha_1+t\alpha_2)\psi(x)\right) \\
&\ge & (1-t)\inf_x(\varphi(x,y^\delta)+\alpha_1 \psi(x)) +
t\inf_x(\varphi(x,y^\delta)+\alpha_2 \psi(x)) \\
& = & (1-t)F({\alpha_1}) + t F({\alpha_2}).
\end{eqnarray*}
Hence $F(\alpha)$ is concave.
\end{proof}
Since $F(\alpha)$ is concave, it is continuous.
\begin{cor}\label{cor:DFae}
$F(\alpha)$ is continuous everywhere.
\end{cor}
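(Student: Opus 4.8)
The content of this corollary is the classical regularity of concave functions, so the plan is to invoke — and, for completeness, to sketch — the fact that a real-valued concave function on an open interval is locally Lipschitz, hence continuous, on that interval. Lemma \ref{lem:F}(ii) gives that $F$ is concave on $(0,\infty)$; since $(0,\infty)$ is open, every point is interior to the domain, and continuity everywhere follows at once. Before doing this one should record that $F$ is real-valued on $(0,\infty)$: under the standing assumptions (for instance $Q_{\text{ad}}$ nonempty and $\varphi(\cdot,y^\delta),\psi$ nonnegative and finite at some admissible point, as in the examples of Section 1) one has $0 \le F(\alpha) \le J_\alpha(x_0) < \infty$ for any fixed $x_0 \in Q_{\text{ad}}$, so $F$ does not take the value $-\infty$ and the concavity argument applies.

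The self-contained argument I would give rests on the monotonicity of chord slopes: concavity of $F$ means that for any $u < v < w$ in $(0,\infty)$,
\[
\frac{F(v)-F(u)}{v-u} \;\ge\; \frac{F(w)-F(v)}{w-v}.
\]
Fix $\alpha_0 > 0$ and choose $0 < a < a' < \alpha_0 < b' < b$. Applying the slope inequality with the left endpoint fixed at $a$ shows that every difference quotient $\frac{F(t)-F(s)}{t-s}$ with $a' \le s < t \le b'$ is bounded above by the constant $\frac{F(a')-F(a)}{a'-a}$; applying it with the right endpoint fixed at $b$ shows the same difference quotients are bounded below by $\frac{F(b)-F(b')}{b-b'}$. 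Hence $F$ is Lipschitz on $[a',b']$, and in particular continuous at $\alpha_0$. Since $\alpha_0 \in (0,\infty)$ was arbitrary, $F$ is continuous everywhere.

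I do not anticipate any real obstacle here: the only two points worth checking are the finiteness of $F$ (handled above) and the chord-slope monotonicity, which is just a rearrangement of the defining inequality of concavity already used in the proof of Lemma \ref{lem:F}(ii). If one prefers brevity, the corollary may instead be quoted directly from any standard reference on convex analysis.
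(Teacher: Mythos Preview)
Your proposal is correct and follows exactly the paper's approach: the paper simply records, in the single sentence preceding the corollary, that concavity of $F$ (Lemma~\ref{lem:F}(ii)) implies continuity, without further argument. You supply the standard chord-slope/local Lipschitz justification that the paper takes for granted; nothing more is needed.
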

\begin{rem}
Lemma \ref{lem:F} does not require the existence of $x\in Q_{\text{ad}}$ that achieves the infimum of $J_\alpha$.
\end{rem}
We examine the minimum value function more closely. Let $D^{+}F$ and $D^{-}F$ are one-sided derivatives of the value function $F$, i.e.,
\[
D^{+}F(\alpha) = \lim_{h \downarrow 0} \frac{F(\alpha)-F(\alpha-h)}{h},\quad
D^{-}F(\alpha) = \lim_{h \downarrow 0} \frac{F(\alpha+h)-F(\alpha)}{h}.
\]

Note that both limits exist for all $\alpha>0$ and that $D^{\pm}F(\alpha)\ge 0$. Indeed,
for given $\alpha>0$, let $0<h_1<h_2<\alpha$. Select $t$ as $t=1-\frac{h_1}{h_2}<1$,
then $\alpha-h_1= t\alpha+(1-t)(\alpha-h_2)$. By the convexity of $F(\alpha)$,
\[
F(\alpha-h_1)\ge tF(\alpha)+(1-t)F(\alpha-h_2) = \left(1-\frac{h_1}{h_2}\right)F(\alpha)+\frac{h_1}{h_2}F(\alpha-h_2).
\]
Then
\begin{eqnarray*}
  &&h_2\left(\frac{F(\alpha)-F(\alpha-h_2)}{h_2}-\frac{F(\alpha)-F(\alpha-h_1)}{h_1}\right)
  =F(\alpha)-F(\alpha-h_2)-\frac{h_2}{h_1}F(\alpha)+\frac{h_2}{h_1}F(\alpha-h_1)\\
  \quad && \ge\left(1-\frac{h_2}{h_1}\right)F(\alpha)-F(\alpha-h_2)+
  \frac{h_2}{h_1}\left\{\left(1-\frac{h_1}{h_2}\right)F(\alpha)+\frac{h_1}{h_2}F(\alpha-h_2) \right\}=0.
\end{eqnarray*}
Therefore, the limit $\displaystyle \lim_{h\downarrow 0}\frac{F(\alpha)-F(\alpha-h)}{h}\ge 0$ exists.

Here we list two the basic properties of $D^{\pm}F(\alpha)$.
\begin{enumerate}
\item{\bf Monotonicity}:  $0\le D^{-}F({\alpha})\le D^{+}F(\alpha)\le D^{-}F(\hat{\alpha})\le D^{+}F(\hat{\alpha})$, for all $0<\hat{\alpha}<{\alpha}$,
\item{\bf Left and Right continuity}: $\displaystyle D^{+}F(\alpha) = \lim_{h \downarrow 0} D^{+}F(\alpha-h)$, and
$\displaystyle D^{-}F(\alpha) = \lim_{h \downarrow 0} D^{-}F(\alpha+h)$ for all $\alpha>0$.
\end{enumerate}
The differentiability of $F$ at $\alpha$ guarantees the continuity of $D^{\pm}F$ at this point. Indeed, the monotonicity of $D^{\pm}F$ and the left continuity of $D^-F$ yield the inequalities $\disp D^-F(\alpha) = \lim_{h\downarrow 0}D^-F(\alpha+h)\le \lim_{h\downarrow 0}D^+F(\alpha+h) \le D^+F(\alpha)$. Now suppose $F$ is differentiable at $\alpha$, i.e, $D^{+}F(\alpha)=D^{-}F(\alpha)$. Then from the inequalities it follows that $ \disp\lim_{h\downarrow 0}D^+F(\alpha+h) = D^+F(\alpha)$, which shows the continuity of $D^{+}F$ at $\alpha$. Similarly it follows that $D^-F$ is continuous at $\alpha$. The fact is used in section \ref{sec:DF}.

For the analysis of the minimum value function, we introduce the following function of $\gamma>0$:
\begin{eqnarray*}
G(\gamma)&:=&\gamma F(\frac{1}{\gamma})=\gamma \inf_{x\in Q_{\text{ad}}} \{\varphi(x,y^\delta)+\frac{1}{\gamma}\psi(x)\}\\
&=&\inf_{x\in Q_{\text{ad}}} \{\psi(x)+\gamma\,\varphi(x,y^\delta)\}.
\end{eqnarray*}
Note that once a result on $F(\alpha)$ is proved then the same result is true for $G(\gamma)$ as well. For example, we can show that $G(\gamma)$ is concave in exactly the same way as Lemma \ref{lem:F}. Every results on $G(\gamma)$ can be written in terms of $F(\alpha)$ by using the relation:
\begin{equation}\label{equ:DG}
D^{\pm}G(\gamma)|_{\gamma=\alpha^{-1}} = F(\alpha)-\alpha D^{\mp}F(\alpha),
\end{equation}
which can be verified without difficulty.
\subsection{Asymptotic property of $F(\alpha)$}
We study the asymptotic behavior of $F(\alpha)$.
\begin{Prop}\label{prop:asymp}
\begin{equation}
  \lim_{\alpha\downarrow 0}F(\alpha)
  = \inf_{x\in Q_{\text{ad}}}\varphi(x,y^\delta), \quad \lim_{\alpha\downarrow 0}\alpha D^{\pm}F(\alpha)=0.
\end{equation}
\end{Prop}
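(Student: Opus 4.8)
The plan is to treat the two assertions separately. For the first, the point is that $F$ is monotone (Lemma~\ref{lem:F}(i)), so $\lim_{\alpha\downarrow 0}F(\alpha)=\inf_{\alpha>0}F(\alpha)$ automatically exists and only needs to be identified. Writing $\Phi:=\inf_{x\in Q_{\text{ad}}}\varphi(x,y^\delta)$, on one side I would use $\psi\ge 0$ to get $J_\alpha(x)\ge\varphi(x,y^\delta)\ge\Phi$ for every $x\in Q_{\text{ad}}$ and $\alpha>0$, hence $F(\alpha)\ge\Phi$ and therefore $\lim_{\alpha\downarrow 0}F(\alpha)\ge\Phi$. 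On the other side, for $\varepsilon>0$ I would pick $x_\varepsilon\in Q_{\text{ad}}$ with $\varphi(x_\varepsilon,y^\delta)\le\Phi+\varepsilon$ and $\psi(x_\varepsilon)<\infty$, so that $F(\alpha)\le J_\alpha(x_\varepsilon)=\varphi(x_\varepsilon,y^\delta)+\alpha\psi(x_\varepsilon)\le\Phi+\varepsilon+\alpha\psi(x_\varepsilon)$, whence $\limsup_{\alpha\downarrow 0}F(\alpha)\le\Phi+\varepsilon$; letting $\varepsilon\downarrow 0$ finishes the first limit.

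For the second assertion I would first reduce to $D^{+}F$: since $0\le D^{-}F(\alpha)\le D^{+}F(\alpha)$ by the monotonicity property of the one-sided derivatives, it suffices to prove $\alpha D^{+}F(\alpha)\to 0$, the $D^{-}$ case then following by squeezing between $0$ and $\alpha D^{+}F(\alpha)$. The key ingredient is the inequality already established above, namely that $h\mapsto\bigl(F(\alpha)-F(\alpha-h)\bigr)/h$ is nondecreasing on $(0,\alpha)$, so that $D^{+}F(\alpha)=\inf_{0<h<\alpha}\bigl(F(\alpha)-F(\alpha-h)\bigr)/h$. Taking $h=\alpha/2$ then gives
\[
0\le \alpha D^{+}F(\alpha)\le 2\Bigl(F(\alpha)-F\bigl(\tfrac{\alpha}{2}\bigr)\Bigr),
\]
and by the first assertion both $F(\alpha)$ and $F(\alpha/2)$ converge to $\Phi$ as $\alpha\downarrow 0$, so the right-hand side tends to $0$ and the claim follows.

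I do not expect a serious obstacle here: everything rests on Lemma~\ref{lem:F} and the elementary facts about one-sided derivatives of concave functions recorded just above. The one point deserving a word of care is the selection of the near-minimizer $x_\varepsilon$ with $\psi(x_\varepsilon)<\infty$ in the first part; this is legitimate in the present framework, where $\psi$ is finite-valued on $Q_{\text{ad}}$ (equivalently, $\inf_{x\in Q_{\text{ad}}}\varphi(x,y^\delta)$ may be read as the infimum over the effective domain of $\psi$). As an aside, the second limit can also be obtained from the identity $\alpha D^{\mp}F(\alpha)=F(\alpha)-D^{\pm}G(\alpha^{-1})$ together with the concavity of $G$, but routing it through the difference $F(\alpha)-F(\alpha/2)$ is the shortest way.
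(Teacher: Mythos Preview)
Your proof is correct and follows essentially the same route as the paper: both use an $\varepsilon$-near-minimizer for the first limit and the monotonicity of the difference quotients of the concave function $F$ for the second. Your treatment is slightly tidier --- you take $h=\alpha/2$ where the paper lets $h\uparrow\alpha$ to get $\alpha D^{+}F(\alpha)\le F(\alpha)-C$, and you dispose of $D^{-}F$ by the direct squeeze $0\le\alpha D^{-}F(\alpha)\le\alpha D^{+}F(\alpha)$, whereas the paper argues via an auxiliary sequence $\hat\alpha_j<\alpha_j<2\hat\alpha_j$ and the bound $\alpha_j D^{-}F(\alpha_j)\le 2\hat\alpha_j D^{+}F(\hat\alpha_j)$.
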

\begin{proof}
Let $C:=\inf_{x\in Q_{\text{ad}}}\varphi(x,y^\delta)$. For any $\varepsilon>0$ there exists $x^\varepsilon \in Q_{\text{ad}}$ such that
$C\le\varphi(x^\varepsilon,y^\delta) < C +\varepsilon$. Then
\begin{eqnarray*}
  C&\le& \inf_{x\in  Q_{\text{ad}}}\varphi(x,y^\delta)+\alpha\,\inf_{x\in  Q_{\text{ad}}}\psi(x)\le F(\alpha)
  \le J_\alpha(x^\varepsilon) \\
  &=& \varphi(x^\varepsilon,y^\delta)+\alpha\,\psi(x^\varepsilon)< C + \varepsilon + \alpha\,\psi(x^\varepsilon).
\end{eqnarray*}
By passing to the limit $\alpha\downarrow 0$ and taking into account that $\varepsilon$ is arbitrary, we obtain $\displaystyle C=\lim_{\beta
\downarrow 0}F(\alpha)$.

On the other hand, as we see above we have $\displaystyle D^{+}F(\alpha)\le \frac{F(\alpha)-F(\alpha-h)}{h}$ for all $h<\alpha$, here $\alpha$ is arbitrary fixed. Since $\displaystyle\lim_{h\uparrow\alpha}F(\beta
-h)= C$, we obtain
\[ D^{+}F(\alpha)\le \lim_{h\uparrow\alpha}\frac{F(\alpha)-F(\alpha-h)}{h} = \frac{F(\alpha)-C}{\alpha},
\]
which means $\displaystyle C\le F(\alpha)-\alpha D^{+}F(\alpha)$. Therefore $C\le F(\alpha)-\alpha D^{+}F(\alpha)\le F(\alpha)$. From this inequality we get $\displaystyle\lim_{\alpha\downarrow 0}\alpha D^{+}F(\alpha)=0$.

Let $\{\alpha_j\}$ be a sequence that converges to 0 as $j\to \infty$. We pick $\hat{\alpha}_j$ such that
$\hat{\alpha}_j<\alpha_j<2\hat{\alpha}_j$ to get the sequence $\{\hat{\alpha}_j\}$ such that $\hat{\alpha}_j\to 0$. By monotonicity of $D^{\pm}F(\alpha)$ we have
\begin{eqnarray*}
0&\le&
\alpha_j D^{-}F(\alpha_j)\le {\alpha}_jD^{+}F(\hat{\alpha}_j)\\
&=&\hat{\alpha}_jD^{+}F(\hat{\alpha}_j)+(\alpha_j-\hat{\alpha}_j)D^{+}F(\hat{\alpha}_j)
<2\hat{\alpha}_jD^{+}F(\hat{\alpha}_j).
\end{eqnarray*}
Passing to the limit $j\to\infty$ yields $\displaystyle \lim_{\alpha\downarrow 0}\alpha D^{-}F(\alpha)=0$.
\end{proof}
From this proposition it follows that $F(\alpha)$ is right continuous at $0$.
\begin{Prop}
\begin{equation}
  \lim_{\alpha\rightarrow \infty}\frac{F(\alpha)}{\alpha}
  =\lim_{\alpha\rightarrow \infty} D^{\pm}F(\alpha)
  = \inf_{x\in Q_{\text{ad}}}\psi(x).
\end{equation}
\end{Prop}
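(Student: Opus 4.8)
The plan is to reduce the statement entirely to Proposition~\ref{prop:asymp} by exploiting the duality between $F$ and the auxiliary function $G$ under the substitution $\gamma = 1/\alpha$. As noted just before this subsection, $G(\gamma) = \gamma F(1/\gamma) = \inf_{x\in Q_{\text{ad}}}\{\psi(x) + \gamma\,\varphi(x,y^\delta)\}$ is again a minimum value function of exactly the type \eqref{valuef}, with the restoration functional $\psi$ now in the role of the data fidelity term and $\varphi(\cdot,y^\delta)$ in the role of the restoration term. Hence every statement proved for $F$ applies verbatim to $G$; in particular Proposition~\ref{prop:asymp}, applied to $G$, yields
\[
\lim_{\gamma\downarrow 0} G(\gamma) = \inf_{x\in Q_{\text{ad}}}\psi(x), \qquad \lim_{\gamma\downarrow 0}\gamma\, D^{\pm}G(\gamma) = 0 .
\]

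From here the two asserted equalities follow with no further estimates. First, with $\gamma = 1/\alpha$ we have $G(\gamma) = \gamma F(1/\gamma) = F(\alpha)/\alpha$, and $\alpha\to\infty$ corresponds to $\gamma\downarrow 0$, so the first limit above reads $\lim_{\alpha\to\infty}F(\alpha)/\alpha = \inf_{x\in Q_{\text{ad}}}\psi(x)$. Second, multiplying the identity \eqref{equ:DG}, $D^{\pm}G(\gamma)|_{\gamma=\alpha^{-1}} = F(\alpha)-\alpha D^{\mp}F(\alpha)$, by $\gamma = 1/\alpha$ gives
\[
\gamma\, D^{\pm}G(\gamma)\big|_{\gamma=\alpha^{-1}} = \frac{F(\alpha)}{\alpha} - D^{\mp}F(\alpha),
\]
whose left-hand side tends to $0$ as $\alpha\to\infty$ by the second limit above. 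Combining with the first conclusion, $D^{\mp}F(\alpha)\to \inf_{x\in Q_{\text{ad}}}\psi(x)$; since $\mp$ here runs over both signs, this is precisely $\lim_{\alpha\to\infty}D^{\pm}F(\alpha) = \inf_{x\in Q_{\text{ad}}}\psi(x)$. (Existence of these limits is in any case already guaranteed by the monotonicity of $D^{\pm}F$.)

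The only step that needs a moment's justification, rather than being purely mechanical, is the transfer of Proposition~\ref{prop:asymp} to $G$: one should confirm that $G$ really does have the structural form \eqref{valuef} with $\psi$ and $\varphi$ interchanged — which it does — and recall that, as implicitly used already in Lemma~\ref{lem:F}, $\psi\ge 0$, so that $\inf_{x\in Q_{\text{ad}}}\psi(x)\in[0,\infty)$ and the statement is meaningful. I do not expect any genuine difficulty here; all the analytic content was already spent in proving Lemma~\ref{lem:F} and Proposition~\ref{prop:asymp}, and the present proposition is a formal corollary of the $\alpha\leftrightarrow\gamma^{-1}$ symmetry. (If one preferred not to invoke $G$, one could instead mirror the proof of Proposition~\ref{prop:asymp} directly, using the concavity of $F$ to write $F(\alpha) = F(\alpha_0)+\int_{\alpha_0}^{\alpha}D^{\pm}F(s)\,ds$ together with a Cesàro-type argument; but the route through $G$ is shorter.)
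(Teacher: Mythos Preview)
Your proof is correct and follows essentially the same route as the paper: apply Proposition~\ref{prop:asymp} to $G$, then translate back to $F$ via $G(\alpha^{-1})=F(\alpha)/\alpha$ and relation~\eqref{equ:DG}. The paper's proof is simply the terse version of what you wrote, compressing your second step into the phrase ``by using the relation~\eqref{equ:DG}, we arrive at the desired results.''
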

\begin{proof}
Applying Proposition \ref{prop:asymp} to $G(\gamma)$ yields
\begin{equation*}
  \lim_{\gamma\downarrow 0}G(\gamma)
  =\inf_{x\in Q_{\text{ad}}}\psi(x),\quad \lim_{\gamma\downarrow 0}\gamma D^{\pm}G(\gamma)=0,
\end{equation*}
or equivalently,
\begin{equation*}
  \lim_{\alpha\to \infty}G(\alpha^{-1})=\inf_{x\in Q_{\text{ad}}}\psi(x),\quad
  \lim_{\alpha\to \infty}\alpha^{-1} D^{\pm}G(\alpha^{-1})=0.
\end{equation*}
By using the relation (\ref{equ:DG}), we arrive at the desired results.
\end{proof}
\subsection{Differentiability of $F$}\label{sec:DF}
We investigate the relation between the derivative $F'$ and the two terms, the fidelity functional $\psi$ and the regularization functional $\varphi$. We also show a sufficient condition for the existence of $F'$ in terms of $\varphi$ and $\psi$.
\begin{Def}
We denote the set of solutions of the minimization problem
$\displaystyle \inf_{x\in Q_{\text{ad}}}J_\alpha(x)$ by $\mathcal{M}_\alpha$.
\end{Def}
For simplicity of our argument we always assume the existence of the minimizer.

It may happen that there exist two minimizers $x_\alpha \neq \hat{x}_\alpha \in \mathcal{M}_\alpha$ satisfying
\[
F(\alpha) = \varphi(x_\alpha,y^\delta)+\alpha\psi(x_\alpha) = \varphi(\hat{x}_\alpha,y^\delta)+\alpha\psi(\hat{x}_\alpha)
\]
and
\[
\varphi(x_\alpha,y^\delta)<\varphi(\hat{x}_\alpha,y^\delta) \mbox{ and }  \psi(x_\alpha)>\psi(\hat{x}_\alpha).
\]
That is, for a fixed $\alpha$, the value $\varphi(x_\alpha,y^\delta)$ and $\psi(x_\alpha)$ may vary depending on the choice of the minimizer
$x_\alpha\in\mathcal{M}_{\alpha}$. Thus, it is possible that the maps $\alpha\rightarrow \varphi(x_\alpha,y^\delta)$ and $\alpha\rightarrow \psi(x_\alpha)$ will be multi-value functions.
In what follows, we study the basic properties of those functions.
We begin with the following inequality.

\begin{lem}\label{lem:DF}
The following inequalities hold for all $x_\alpha\in\mathcal{M}_\alpha$:
\begin{align}\label{ineq:DFandpsi}
D^{-}F(\alpha)\le \ &\psi(x_\alpha) \le D^{+}F(\alpha),\\
F(\alpha)-\alpha\,D^{+}F(\alpha)\le \ &\varphi(x_\alpha,y^\delta) \le F(\alpha)-\alpha\, D^{-}F(\alpha).
\label{ineq:DFandphi}
\end{align}
\end{lem}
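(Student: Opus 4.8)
The plan is to exploit the defining inequality of a minimizer $x_\alpha\in\mathcal{M}_\alpha$, namely $F(\alpha)=\varphi(x_\alpha,y^\delta)+\alpha\psi(x_\alpha)$, together with the suboptimality bound $F(\beta)\le\varphi(x_\alpha,y^\delta)+\beta\psi(x_\alpha)$ for every $\beta>0$, since $x_\alpha$ is merely admissible for the problem defining $F(\beta)$. Subtracting these gives, for every $\beta>0$,
\[
F(\beta)-F(\alpha)\le(\beta-\alpha)\,\psi(x_\alpha).
\]
This single inequality, read from the left and from the right, will yield both displays.

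First I would take $\beta=\alpha+h$ with $h>0$: then $F(\alpha+h)-F(\alpha)\le h\,\psi(x_\alpha)$, so dividing by $h$ and letting $h\downarrow 0$ gives $D^{-}F(\alpha)\le\psi(x_\alpha)$ (recall the paper's convention $D^-F(\alpha)=\lim_{h\downarrow0}\frac{F(\alpha+h)-F(\alpha)}{h}$). Next I would take $\beta=\alpha-h$ with $0<h<\alpha$: then $F(\alpha-h)-F(\alpha)\le -h\,\psi(x_\alpha)$, i.e. $\frac{F(\alpha)-F(\alpha-h)}{h}\ge\psi(x_\alpha)$, and letting $h\downarrow 0$ gives $D^{+}F(\alpha)\ge\psi(x_\alpha)$. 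Together these are exactly \eqref{ineq:DFandpsi}.

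For \eqref{ineq:DFandphi} I would simply substitute $\psi(x_\alpha)=\frac{F(\alpha)-\varphi(x_\alpha,y^\delta)}{\alpha}$ from the minimizer identity into \eqref{ineq:DFandpsi} and rearrange: the inequality $D^{-}F(\alpha)\le\frac{F(\alpha)-\varphi(x_\alpha,y^\delta)}{\alpha}$ becomes $\varphi(x_\alpha,y^\delta)\le F(\alpha)-\alpha D^{-}F(\alpha)$, and $\frac{F(\alpha)-\varphi(x_\alpha,y^\delta)}{\alpha}\le D^{+}F(\alpha)$ becomes $F(\alpha)-\alpha D^{+}F(\alpha)\le\varphi(x_\alpha,y^\delta)$, where multiplying by $\alpha>0$ preserves the directions. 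Alternatively one can derive \eqref{ineq:DFandphi} directly via the analogous argument for $G(\gamma)$ using relation \eqref{equ:DG}, but the substitution route is cleaner.

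There is no serious obstacle here; the argument is a two-line consequence of optimality. The only point requiring a little care is bookkeeping the one-sided derivative conventions — which of $D^+F,D^-F$ comes from the left increment and which from the right — and checking that the limits in question exist, which is already guaranteed by the concavity of $F$ established in Lemma \ref{lem:F} and the discussion following it. One should also note that the bounds hold uniformly over all $x_\alpha\in\mathcal{M}_\alpha$ because the derivation used only the two properties that every element of $\mathcal{M}_\alpha$ enjoys, so in particular $\psi$ and $\varphi(\cdot,y^\delta)$ are constant on $\mathcal{M}_\alpha$ whenever $F$ is differentiable at $\alpha$.
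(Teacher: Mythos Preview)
Your proof is correct and follows essentially the same line as the paper's: both use the suboptimality bound $F(\beta)\le J_\beta(x_\alpha)$ together with $F(\alpha)=J_\alpha(x_\alpha)$ to obtain $(\alpha-\beta)\psi(x_\alpha)\le F(\alpha)-F(\beta)$, then pass to the two one-sided limits, and deduce \eqref{ineq:DFandphi} by substituting $\varphi(x_\alpha,y^\delta)=F(\alpha)-\alpha\psi(x_\alpha)$. Your presentation is slightly more streamlined in that you write the key inequality once for general $\beta$ and specialize, whereas the paper treats the case $\hat\alpha<\alpha$ explicitly and invokes ``similarly'' for the other side.
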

\begin{proof}
For arbitrary $\hat{\alpha}$ such that $0<\hat{\alpha}<\alpha$,
\[
F(\hat{\alpha}) = \inf_{x\in Q_{\text{ad}}} J_{\hat{\alpha}}(x)\le J_{\hat{\alpha}}(x_\alpha)
= \varphi(x_\alpha,y^\delta) + \hat{\alpha}\psi(x_\alpha).
\]
Thus,
\begin{align*}
F(\alpha)-F(\hat{\alpha})
 &\ge\varphi(x_\alpha,y^\delta) + \alpha\psi(x_\alpha)-\varphi(x_{\alpha},y^\delta)
-\hat{\alpha}\psi(x_\alpha)\\
& = (\alpha-\hat{\alpha})\psi(x_\alpha).
\end{align*}
Thus we obtain
\begin{equation*} \label{(1)}
\frac{F(\alpha)-F(\hat{\alpha})}{\alpha-\hat{\alpha}} \ge \psi(x_{\alpha}).
\end{equation*}
By passing to the limit $\hat{\alpha}\uparrow \alpha$, it follows that $D^{+}F(\alpha)\ge \psi(x_\alpha)$.
Similarly, we obtain $D^{-}F(\alpha)\le \psi(x_\alpha)$ and thus the inequality (\ref{ineq:DFandpsi}) is proven.

We also obtain the second inequality (\ref{ineq:DFandphi}) from (\ref{ineq:DFandpsi}) and the definition of $F(\alpha)$.
\end{proof}
\begin{cor}\label{cor:DF=psi}
If $F'(\alpha)$ exists at $\alpha$, then $\psi(x_{\alpha})$ and $\varphi(x_{\alpha},y^\delta)$ are single valued at $\alpha$ and it holds that
$F'(\alpha)=\psi(x_{\alpha})$ and $F(\alpha)-\alpha\,F'(\alpha)=\varphi(x_{\alpha},y^\delta)$  for all $x_{\alpha}\in\mathcal{M}_{\alpha}$.
\end{cor}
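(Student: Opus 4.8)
The plan is to read the conclusion off directly from Lemma~\ref{lem:DF} by a squeezing argument. Differentiability of $F$ at $\alpha$ means precisely $D^{-}F(\alpha)=D^{+}F(\alpha)=F'(\alpha)$. First I would substitute this equality into the chain of inequalities~(\ref{ineq:DFandpsi}) to obtain, for every $x_\alpha\in\mathcal{M}_\alpha$,
\[
F'(\alpha)=D^{-}F(\alpha)\le\psi(x_\alpha)\le D^{+}F(\alpha)=F'(\alpha),
\]
so that $\psi(x_\alpha)=F'(\alpha)$ regardless of which minimizer $x_\alpha$ is picked. In particular the a~priori multi-valued map $\alpha\mapsto\psi(x_\alpha)$ takes the single value $F'(\alpha)$ at this point.

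Next I would treat $\varphi$ in the same way. Feeding $D^{\pm}F(\alpha)=F'(\alpha)$ into~(\ref{ineq:DFandphi}) yields
\[
F(\alpha)-\alpha F'(\alpha)\le\varphi(x_\alpha,y^\delta)\le F(\alpha)-\alpha F'(\alpha),
\]
hence $\varphi(x_\alpha,y^\delta)=F(\alpha)-\alpha F'(\alpha)$ for all $x_\alpha\in\mathcal{M}_\alpha$, again independently of the choice of minimizer. Alternatively, once $\psi(x_\alpha)$ is known to be single valued, the identity $F(\alpha)=J_\alpha(x_\alpha)=\varphi(x_\alpha,y^\delta)+\alpha\,\psi(x_\alpha)$ gives $\varphi(x_\alpha,y^\delta)=F(\alpha)-\alpha\,\psi(x_\alpha)$ directly.

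There is no real obstacle here: the statement is an immediate corollary of Lemma~\ref{lem:DF}, the only content being the observation that differentiability collapses the two one-sided bounds to a common value. The one point worth stating carefully is the meaning of ``single valued'': it asserts that the numbers $\psi(x_\alpha)$ and $\varphi(x_\alpha,y^\delta)$ do not depend on the choice of $x_\alpha\in\mathcal{M}_\alpha$, not that the minimizer itself is unique. In fact the argument above shows that at any point of differentiability of $F$ the pathological pair of minimizers exhibited just before Lemma~\ref{lem:DF}, with $\varphi(x_\alpha,y^\delta)<\varphi(\hat{x}_\alpha,y^\delta)$ and $\psi(x_\alpha)>\psi(\hat{x}_\alpha)$, cannot occur.
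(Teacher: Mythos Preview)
Your argument is correct and is exactly the intended one: the paper states this as an immediate corollary of Lemma~\ref{lem:DF} without writing out a separate proof, and your squeezing of the inequalities~(\ref{ineq:DFandpsi}) and~(\ref{ineq:DFandphi}) under the hypothesis $D^{-}F(\alpha)=D^{+}F(\alpha)=F'(\alpha)$ is precisely how that implication is meant to be read. The clarification you add about ``single valued'' referring to the values $\psi(x_\alpha)$ and $\varphi(x_\alpha,y^\delta)$ rather than to uniqueness of the minimizer is also on point.
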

Note a monotone increasing (decreasing) function is differentiable except on a possibly countable set.
\begin{cor}\label{cor:DF=psi}
There exists a possibly countable set $N$ such that,
\begin{itemize}
  \item $F$ is differentiable and the multi-value functions $\varphi(x_\alpha,y^\delta)$ and $\psi(x_{\alpha})$
have single value on $\alpha \in N^{\complement}$.
  \item $\varphi(x_{\alpha},y^\delta)=F(\alpha)-\alpha\,F'(\alpha)$, $\psi(x_{\alpha})=F'(\alpha)$ for all $x_{\alpha}\in\mathcal{M}_{\alpha}$ if $\alpha \in N^{\complement}$.
\end{itemize}
\end{cor}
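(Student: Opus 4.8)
The idea is that this corollary simply packages two ingredients already in place: the concavity of $F$ (Lemma~\ref{lem:F}), from which the one-sided derivatives $D^{\pm}F$ are monotone and hence well behaved off a small set, and Lemma~\ref{lem:DF} (equivalently, the preceding corollary), which turns the coincidence $D^{+}F(\alpha)=D^{-}F(\alpha)$ at a point into single-valuedness of $\varphi(x_\alpha,y^\delta)$ and $\psi(x_\alpha)$ over $\mathcal{M}_\alpha$, together with the displayed formulas. So the whole statement reduces to producing an at most countable set $N$ such that $F$ is differentiable on $N^{\complement}$.

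First I would set $N:=\{\alpha>0 : D^{-}F \text{ is discontinuous at }\alpha\}$. By the monotonicity property $0\le D^{-}F(\alpha)\le D^{+}F(\alpha)\le D^{-}F(\hat\alpha)\le D^{+}F(\hat\alpha)$ valid for all $0<\hat\alpha<\alpha$, the map $\alpha\mapsto D^{-}F(\alpha)$ is non-increasing on $(0,\infty)$, and a monotone real function has at most countably many points of discontinuity; hence $N$ is at most countable. Next, for $\alpha\notin N$ I would show $F'(\alpha)$ exists: the same chain gives $D^{+}F(\alpha)\le D^{-}F(\hat\alpha)$ for every $\hat\alpha<\alpha$, so $D^{+}F(\alpha)\le \lim_{\hat\alpha\uparrow\alpha}D^{-}F(\hat\alpha)=D^{-}F(\alpha)$, the last equality holding because $D^{-}F$ is continuous (hence left-continuous) at $\alpha$. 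Since always $D^{-}F(\alpha)\le D^{+}F(\alpha)$, we conclude $D^{-}F(\alpha)=D^{+}F(\alpha)$, i.e.\ $F$ is differentiable at $\alpha$ with $F'(\alpha)=D^{\pm}F(\alpha)$.

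Finally, for $\alpha\in N^{\complement}$ the two inequalities of Lemma~\ref{lem:DF} collapse: $D^{-}F(\alpha)=D^{+}F(\alpha)=F'(\alpha)$ squeezes $\psi(x_\alpha)=F'(\alpha)$ and $\varphi(x_\alpha,y^\delta)=F(\alpha)-\alpha F'(\alpha)$ for every $x_\alpha\in\mathcal{M}_\alpha$, which is precisely both bullet points. I do not expect a genuine obstacle: the only non-bookkeeping point is the countability of $N$, and that rests entirely on the elementary fact that a monotone function jumps at only countably many points. The bit needing a little care is pairing the correct one-sided continuity — left-continuity of $D^{-}F$ at points of $N^{\complement}$ — with the correct half of the monotonicity chain, so that the squeeze $D^{+}F(\alpha)\le D^{-}F(\alpha)\le D^{+}F(\alpha)$ really closes.
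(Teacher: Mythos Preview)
Your argument is correct and follows the same route the paper intends: the paper's ``proof'' is the single sentence preceding the corollary (``Note a monotone increasing (decreasing) function is differentiable except on a possibly countable set''), combined with the previous corollary that differentiability of $F$ at $\alpha$ forces $\psi(x_\alpha)$ and $\varphi(x_\alpha,y^\delta)$ to be single-valued with the stated formulas. You have simply spelled out this step explicitly, and in fact more carefully than the paper: the literal claim that a monotone function is \emph{differentiable} off a countable set is false (think of the Cantor function), whereas what is actually needed---and what you use---is that the monotone function $D^{-}F$ has at most countably many \emph{discontinuities}, and that continuity of $D^{-}F$ at $\alpha$ forces $D^{+}F(\alpha)=D^{-}F(\alpha)$ via the monotonicity chain.
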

Corollary \ref{cor:DF=psi} guarantees the differentiability of $F(\alpha)$ except on a possibly countable set.

Next we show the conditions for the differentiability of $F(\alpha)$ at all $\alpha>0$. Firstly, we define the $\psi$ boundness to state the assumptions.
\begin{Def}
A sequence $\{x_n\}_{n=1}^\infty\subset Q_{ad}$ is $\psi$-bounded if there exists constant $M>0$ such that
 $\sup_n\psi(x_n)<M$ for all $n\in\N$.
\end{Def}
\begin{ass}\label{ass:2}
\begin{itemize}
\item Let $\{x_n\}_n\subset Q_{ad}$ be a $\psi$-bounded sequence. There exists a subsequence $\{x_{n_k}\}_k$ which weakly converges to an element $x^*\in Q_{ad}$ in the topology of $X$.
\item $\varphi$ and $\psi$ are lower semi-continuous with respect to weakly convergence sequencers, i.e, if
a subsequence $\{x_{n}\}_n$ which weakly converges to an element $x^*\in Q_{ad}$, then
\[
\varphi(x^*)\le\liminf_{n \to \infty}\varphi(x_{n}) \mbox{ and }\,
\psi(x^*)\le\liminf_{n \to \infty }\psi(x_{n}).
\]
\end{itemize}
\end{ass}
Henceforth hereafter we assume the assumption holds.
Then is guaranteed the existence of the solutions $x^{\pm}_{\alpha}$ in $\mathcal{M}_\alpha$ such that $\psi(x^{\pm}_{\alpha})=D^{\pm}F(\alpha)$.
\begin{thm} There exist $x^{+}_{\alpha}$ and $x^{-}_{\alpha}$ in $\mathcal{M}_\alpha$ such that
$\psi(x^{+}_{\alpha})=D^{+}F(\alpha)$ and $\psi(x^{-}_{\alpha})=D^{-}F(\alpha)$ for all $\alpha>0$.
\end{thm}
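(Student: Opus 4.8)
The plan is to produce $x^{-}_{\alpha}$ directly by a weak‑compactness argument built on Assumption~\ref{ass:2}, and then to obtain $x^{+}_{\alpha}$ by the mirror construction; alternatively $x^{+}_{\alpha}$ can be extracted from the $x^{-}$‑statement applied to $G$ at $\gamma=\alpha^{-1}$ through the identity (\ref{equ:DG}).

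For $x^{-}_{\alpha}$: fix $\alpha>0$, pick $\alpha_{n}\downarrow\alpha$, and choose minimizers $x_{n}\in\mathcal{M}_{\alpha_{n}}$. First I would check that $\{x_{n}\}$ is $\psi$‑bounded: by (\ref{ineq:DFandpsi}) one has $\psi(x_{n})\le D^{+}F(\alpha_{n})$, and since $\alpha<\alpha_{n}$ the monotonicity of the one‑sided derivatives gives $D^{+}F(\alpha_{n})\le D^{-}F(\alpha)\le D^{+}F(\alpha)<\infty$, while $\psi(x_{n})\ge D^{-}F(\alpha_{n})\ge 0$. Assumption~\ref{ass:2} then yields a subsequence $x_{n_{k}}\rightharpoonup x^{*}\in Q_{\text{ad}}$. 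Next I would show $x^{*}\in\mathcal{M}_{\alpha}$: since $x_{n_{k}}$ minimizes $J_{\alpha_{n_{k}}}$,
\[
\varphi(x_{n_{k}},y^{\delta})+\alpha\,\psi(x_{n_{k}})=F(\alpha_{n_{k}})+(\alpha-\alpha_{n_{k}})\,\psi(x_{n_{k}}),
\]
whose right‑hand side converges to $F(\alpha)$ because $F$ is continuous (Corollary~\ref{cor:DFae}), $\alpha_{n_{k}}\to\alpha$, and $\{\psi(x_{n_{k}})\}$ is bounded; combining weak lower semicontinuity of $\varphi$ and $\psi$ with $\liminf a_{k}+\liminf b_{k}\le\liminf(a_{k}+b_{k})$ gives $\varphi(x^{*},y^{\delta})+\alpha\,\psi(x^{*})\le F(\alpha)$, hence equality and $x^{*}\in\mathcal{M}_{\alpha}$. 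Finally I would identify $\psi(x^{*})$: Lemma~\ref{lem:DF} at $\alpha$ gives $\psi(x^{*})\ge D^{-}F(\alpha)$, while lower semicontinuity of $\psi$ together with $\psi(x_{n_{k}})\le D^{+}F(\alpha_{n_{k}})\le D^{-}F(\alpha)$ (again (\ref{ineq:DFandpsi}) and monotonicity) gives $\psi(x^{*})\le D^{-}F(\alpha)$. Thus one may take $x^{-}_{\alpha}:=x^{*}$.

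For $x^{+}_{\alpha}$ I would rerun the same scheme with $\alpha_{n}\uparrow\alpha$. The $\psi$‑bound is now $\psi(x_{n})\le D^{+}F(\alpha_{n})\le D^{+}F(\alpha_{1})<\infty$ by monotonicity, and the proof that the weak limit $x^{*}$ lies in $\mathcal{M}_{\alpha}$ is verbatim. The only substantive change is the last step, since weak lower semicontinuity of $\psi$ now controls $\psi(x^{*})$ only from above. Instead I would use (\ref{ineq:DFandphi}) and monotonicity to obtain $\varphi(x_{n_{k}},y^{\delta})\le F(\alpha_{n_{k}})-\alpha_{n_{k}}D^{-}F(\alpha_{n_{k}})\le F(\alpha_{n_{k}})-\alpha_{n_{k}}D^{+}F(\alpha)\to F(\alpha)-\alpha D^{+}F(\alpha)$; lower semicontinuity of $\varphi$ then gives $\varphi(x^{*},y^{\delta})\le F(\alpha)-\alpha D^{+}F(\alpha)$, and inserting this into $F(\alpha)=\varphi(x^{*},y^{\delta})+\alpha\,\psi(x^{*})$ yields $\psi(x^{*})\ge D^{+}F(\alpha)$, while Lemma~\ref{lem:DF} gives the reverse inequality. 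Equivalently, the $x^{-}$‑statement applied to $G$ at $\gamma=\alpha^{-1}$ produces a minimizer of $J_{\alpha}$ with $\varphi(\cdot,y^{\delta})=D^{-}G(\alpha^{-1})=F(\alpha)-\alpha D^{+}F(\alpha)$, and hence with $\psi=D^{+}F(\alpha)$ by the same inversion.

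I expect the step requiring the most care to be the middle one: showing that the weak limit minimizes $J_{\alpha}$ rather than one of the perturbed functionals $J_{\alpha_{n}}$, where the parameter‑mismatch term $(\alpha-\alpha_{n_{k}})\psi(x_{n_{k}})$ must be driven to zero — which is exactly why the $\psi$‑boundedness of the minimizers (and behind it the monotonicity of $D^{\pm}F$) is indispensable. A secondary subtlety is the asymmetry in the closing step, where weak lower semicontinuity of $\psi$ helps only for $x^{-}_{\alpha}$, so the bound for $x^{+}_{\alpha}$ must be routed through $\varphi$ (or through $G$).
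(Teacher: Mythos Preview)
Your argument is correct and follows the same overall scheme as the paper: approach $\alpha$ by a sequence of nearby parameters, use Assumption~\ref{ass:2} to extract a weak limit of the corresponding minimizers, show via continuity of $F$ and lower semicontinuity that the limit lies in $\mathcal{M}_\alpha$, and then pin down $\psi(x^*)$ using Lemma~\ref{lem:DF}. The directions of approach you use ($\alpha_n\downarrow\alpha$ for $x^-_\alpha$, $\alpha_n\uparrow\alpha$ for $x^+_\alpha$) match the paper's.

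The one place where you and the paper diverge is the final identification of $\psi(x^*)$. The paper first reads off $\psi(x^*)=\varliminf_{h\downarrow 0}\psi(x_{\alpha-h})$ from the chain of equalities, and then squeezes this $\varliminf$ between $\varliminf D^{-}F(\alpha-h)$ and $\varliminf D^{+}F(\alpha-h)$, both of which equal $D^{+}F(\alpha)$ by the left continuity of $D^{+}F$ and the monotonicity inequalities; the $x^{-}$ case is handled symmetrically via right continuity of $D^{-}F$. Your route avoids the one-sided continuity of $D^{\pm}F$ altogether: for $x^{-}_\alpha$ you bound $\psi(x^*)$ from above directly by weak lower semicontinuity of $\psi$ combined with $\psi(x_{n_k})\le D^{-}F(\alpha)$, and for $x^{+}_\alpha$ you instead bound $\varphi(x^*)$ from above and back out $\psi(x^*)\ge D^{+}F(\alpha)$ from $F(\alpha)=\varphi(x^*)+\alpha\psi(x^*)$. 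This is a nice, slightly more elementary variant; it also makes your $\psi$-boundedness argument (via $\psi(x_n)\le D^{+}F(\alpha_n)$) cleaner than the paper's, which implicitly uses $\varphi\ge 0$ to get $\psi(x_{\alpha-h})<F(\alpha)/(\alpha-h)$.

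One caveat on your alternative ``$G$-route'' for $x^{+}_\alpha$: Assumption~\ref{ass:2} is not symmetric in $\varphi$ and $\psi$ (only $\psi$-bounded sequences are assumed to have weakly convergent subsequences), so the $x^{-}$-statement does not transfer to $G$ for free. Your direct argument does not rely on this, so the proof stands; just drop the $G$-alternative or note that it needs the extra compactness hypothesis on $\varphi$.
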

\begin{proof}
Let $\alpha>0$ fixed arbitrary and let $h>0$ be a parameter such that $h<<\alpha$ and $h\rightarrow 0$. Let $\{x_{\alpha-h}\}_{h}$ be a minimizing sequence. Then from the monotonicity of $F(\alpha)$,
\[
J_{\alpha-h}(x_{\alpha-h})=F(\alpha-h) \le F(\alpha) \quad \mbox{ for all } h>0.
\]
Thus $\displaystyle \psi(x_{\alpha-h})<\frac{F(\alpha)}{\alpha-h}$ and it follows that the sequence $\{x_{\alpha-h}\}_{h}$ is bounded. By Assumption \ref{ass:2}, there exists subsequence of $\{x_{\alpha-h}\}_h$, which we denote it by $\{x_{\alpha-h}\}_h$, that converges weakly to an element $x^*\in Q_{\text{ad}}$. Then by the continuity of $F(\alpha)$ and the lower semi-continuity of $\varphi$ and $\psi$, it follows that
\begin{eqnarray*}
F(\alpha)&=&\lim_{h\downarrow 0}F(\alpha-h)
\ge\varliminf_{h\downarrow 0}\varphi(x_{\alpha-h},y^\delta) + \alpha\varliminf_{h\downarrow 0}\psi(x_{\alpha-h})\\
&\ge& \varphi(x^*,y^\delta) + \alpha\varliminf_{h\downarrow 0}\psi(x_{\alpha-h})\\
&\ge& \varphi(x^*,y^\delta) + \alpha\psi(x^*)=J_\alpha(x^*)\ge F(\alpha),
\end{eqnarray*}
and thus we have $\displaystyle \psi(x^*)=\varliminf_{h\downarrow 0}\psi(x_{\alpha-h})$ and $x^*\in\mathcal{M}_\alpha$. In what follows, we show that 
$\psi(x^*)=D^{+}F(\alpha)$.

By Lemma \ref{lem:DF},
\[
\varliminf_{h\downarrow 0}D^{-}F(\alpha-h)\le \varliminf_{h\downarrow 0}\psi(x_{\alpha-h})
\le \varliminf_{h\downarrow 0}D^{+}F(\alpha-h) = D^{+}F(\alpha).
\]
The last equality follows from the left continuity of $D^{+}F(\alpha)$. Since $\displaystyle \varliminf_{h\downarrow 0}D^{-}F(\alpha-h)=D^{+}F(\alpha)$,
we obtain $\displaystyle \varliminf_{h\downarrow 0}\psi(x_{\alpha-h})=D^{+}F(\alpha)$ and therefore $\psi(x^*)=D^{+}F(\alpha)$.

Similarly we can show the existence of the minimizer $x^-_\alpha \in \mathcal{M}_{\alpha}$ that satisfies $\psi(x^-_\alpha)=D^{-}F(\alpha)$. We complete the proof.
\end{proof}
\begin{cor}
There exist elements $x^{+}_\alpha$ and $x^{-}_\alpha$ such that $\displaystyle \psi(x^{+}_\alpha) = \max_{x \in \mathcal{M}_\alpha}\psi(x)$ and
$\displaystyle \psi(x^{-}_\alpha) = \min_{x \in \mathcal{M}_\alpha}\psi(x)$.
\end{cor}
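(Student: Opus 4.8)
The plan is to deduce the corollary directly from the theorem just proved together with Lemma \ref{lem:DF}. The theorem supplies minimizers $x^{+}_{\alpha},x^{-}_{\alpha}\in\mathcal{M}_\alpha$ with $\psi(x^{+}_{\alpha})=D^{+}F(\alpha)$ and $\psi(x^{-}_{\alpha})=D^{-}F(\alpha)$, so these are concrete candidates for the maximizer and minimizer; it remains only to check that they dominate, respectively are dominated by, $\psi(x)$ for every other $x\in\mathcal{M}_\alpha$.

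First I would invoke the inequality (\ref{ineq:DFandpsi}) from Lemma \ref{lem:DF}, which says that $D^{-}F(\alpha)\le\psi(x)\le D^{+}F(\alpha)$ for \emph{every} $x\in\mathcal{M}_\alpha$. Substituting the identities from the theorem, this reads $\psi(x^{-}_{\alpha})\le\psi(x)\le\psi(x^{+}_{\alpha})$ for all $x\in\mathcal{M}_\alpha$. Since $x^{+}_{\alpha}$ and $x^{-}_{\alpha}$ themselves lie in $\mathcal{M}_\alpha$, the bounds are attained within the set, so $\sup_{x\in\mathcal{M}_\alpha}\psi(x)=\psi(x^{+}_{\alpha})$ and $\inf_{x\in\mathcal{M}_\alpha}\psi(x)=\psi(x^{-}_{\alpha})$ are actually a maximum and a minimum. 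This is the whole argument.

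There is essentially no obstacle here: the work has already been done in the theorem (whose hard part was the weak-compactness/lower-semicontinuity argument under Assumption \ref{ass:2} producing the extremal minimizers) and in Lemma \ref{lem:DF} (which pins $\psi(x_\alpha)$ between the one-sided derivatives). The corollary is just the observation that these two facts together say the extreme values of $\psi$ over $\mathcal{M}_\alpha$ coincide with $D^{\pm}F(\alpha)$ and are attained. If one wanted to be slightly more careful one could also note that an analogous statement holds for $\varphi$: by (\ref{ineq:DFandphi}) the same $x^{\pm}_{\alpha}$ realize $\min_{x\in\mathcal{M}_\alpha}\varphi(x,y^\delta)=F(\alpha)-\alpha D^{+}F(\alpha)$ and $\max_{x\in\mathcal{M}_\alpha}\varphi(x,y^\delta)=F(\alpha)-\alpha D^{-}F(\alpha)$, with the roles of $+$ and $-$ interchanged, but this is not needed for the stated corollary.
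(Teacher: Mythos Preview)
Your proposal is correct and is exactly the argument the paper has in mind: the corollary is stated without proof in the paper, as an immediate consequence of the preceding theorem together with Lemma \ref{lem:DF}, and your derivation spells out precisely that deduction.
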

\begin{cor}\label{cor:22}
If $\psi(x_\alpha)=\psi(\hat{x}_\alpha)$ for all $x_\alpha, \hat{x}_\alpha \in \mathcal{M}_\alpha$ for all $\alpha>0$,
 then $F'(\alpha)$ exists and it is continuous 
 for all $\alpha>0$.
\end{cor}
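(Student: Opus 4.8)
The plan is to leverage the Theorem just proven together with Lemma \ref{lem:DF}. Recall that Lemma \ref{lem:DF} gives, for every $x_\alpha\in\mathcal{M}_\alpha$, the sandwich $D^{-}F(\alpha)\le\psi(x_\alpha)\le D^{+}F(\alpha)$, and the Theorem (under Assumption \ref{ass:2}) produces specific minimizers $x^{+}_\alpha,x^{-}_\alpha\in\mathcal{M}_\alpha$ with $\psi(x^{+}_\alpha)=D^{+}F(\alpha)$ and $\psi(x^{-}_\alpha)=D^{-}F(\alpha)$. So the hypothesis that $\psi$ is constant on $\mathcal{M}_\alpha$ immediately forces $D^{+}F(\alpha)=\psi(x^{+}_\alpha)=\psi(x^{-}_\alpha)=D^{-}F(\alpha)$, i.e.\ $F$ is differentiable at $\alpha$. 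Since $\alpha>0$ was arbitrary, $F'(\alpha)$ exists everywhere.

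Next I would establish continuity of $F'$. The excerpt already records the relevant facts in the two displayed bullet points preceding the definition of $G$: the one-sided derivatives $D^{\pm}F$ are monotone (nonincreasing in $\alpha$), $D^{+}F$ is left-continuous and $D^{-}F$ is right-continuous, and — as spelled out in the paragraph beginning ``The differentiability of $F$ at $\alpha$ guarantees the continuity of $D^{\pm}F$ at this point'' — at any point of differentiability one has $\lim_{h\downarrow 0}D^{+}F(\alpha+h)=D^{+}F(\alpha)$ and likewise for the limit from the left, so both $D^{\pm}F$ are continuous there. Having shown $F$ is differentiable at \emph{every} $\alpha>0$, that paragraph applies at every point, so $F'=D^{+}F=D^{-}F$ is continuous on all of $(0,\infty)$.

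To make the argument self-contained I would restate the short chain explicitly: by the monotonicity of $D^{\pm}F$ and right continuity of $D^{-}F$, $D^{-}F(\alpha)=\lim_{h\downarrow0}D^{-}F(\alpha+h)\le\lim_{h\downarrow0}D^{+}F(\alpha+h)\le D^{+}F(\alpha)=F'(\alpha)=D^{-}F(\alpha)$, which collapses to equality throughout and gives $\lim_{h\downarrow0}D^{+}F(\alpha+h)=F'(\alpha)$; a symmetric computation using left continuity of $D^{+}F$ handles the limit $h\uparrow 0$. Thus $F'$ is continuous at $\alpha$.

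There is no real obstacle here — the corollary is essentially a bookkeeping consequence of the Theorem and of the regularity properties of one-sided derivatives of concave functions already assembled earlier. The only point requiring a moment's care is making sure the constancy-of-$\psi$ hypothesis is invoked at the right place, namely to equate the two \emph{particular} minimizers $x^{\pm}_\alpha$ supplied by the Theorem (rather than trying to use Lemma \ref{lem:DF} alone, which would only squeeze $\psi(x_\alpha)$ between the two one-sided derivatives without forcing them to coincide). Everything else is a direct appeal to results stated above.
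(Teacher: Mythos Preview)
Your proposal is correct and matches the paper's intended argument: the corollary is stated without proof precisely because it is an immediate consequence of the preceding Theorem (supplying $x^{\pm}_\alpha\in\mathcal{M}_\alpha$ with $\psi(x^{\pm}_\alpha)=D^{\pm}F(\alpha)$) together with the paragraph in Section~\ref{sec:F} showing that differentiability of $F$ at a point forces continuity of $D^{\pm}F$ there. Your write-up simply makes explicit the two-line deduction the paper leaves to the reader.
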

\begin{cor}\label{cor:2}
Assume that the solution of minimization problem (\ref{TikhonovFunctional}) is unique for all $\alpha>0$, then $F'(\alpha)$ exists and it is continuous for all $\alpha>0$.
\end{cor}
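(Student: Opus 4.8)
The plan is to recognize that the hypothesis of Corollary~\ref{cor:2} is a (much stronger) special case of the hypothesis of Corollary~\ref{cor:22}. If the minimizer of problem~(\ref{TikhonovFunctional}) is unique for every $\alpha>0$, then $\mathcal{M}_\alpha$ is a singleton, say $\mathcal{M}_\alpha=\{x_\alpha\}$, so the condition ``$\psi(x_\alpha)=\psi(\hat x_\alpha)$ for all $x_\alpha,\hat x_\alpha\in\mathcal{M}_\alpha$'' holds vacuously. Hence Corollary~\ref{cor:22} applies verbatim and yields both the existence and the continuity of $F'$ on $(0,\infty)$. Under this reading there is essentially no obstacle; the only thing to verify is that Corollary~\ref{cor:22} really needs nothing beyond single-valuedness of $\psi$ on $\mathcal{M}_\alpha$, which it does not.

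If a self-contained argument is preferred, I would unwind it through the existence Theorem that precedes Corollary~\ref{cor:22}. Fix $\alpha>0$. That theorem provides $x^{+}_\alpha,x^{-}_\alpha\in\mathcal{M}_\alpha$ with $\psi(x^{+}_\alpha)=D^{+}F(\alpha)$ and $\psi(x^{-}_\alpha)=D^{-}F(\alpha)$. By uniqueness $x^{+}_\alpha=x^{-}_\alpha$, so $D^{+}F(\alpha)=D^{-}F(\alpha)$, i.e.\ $F$ is differentiable at $\alpha$; by Lemma~\ref{lem:DF} we also recover $F'(\alpha)=\psi(x_\alpha)$ and $\varphi(x_\alpha,y^\delta)=F(\alpha)-\alpha F'(\alpha)$. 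Since $\alpha>0$ was arbitrary, $F$ is differentiable on all of $(0,\infty)$. For continuity of $F'$ I would invoke the observation already recorded in the text (the paragraph beginning ``The differentiability of $F$ at $\alpha$ guarantees the continuity of $D^{\pm}F$''): the monotonicity of $D^{\pm}F$ together with their one-sided continuity forces, at any point of differentiability, $\lim_{h\downarrow 0}D^{+}F(\alpha+h)=D^{+}F(\alpha)$ and $\lim_{h\downarrow 0}D^{-}F(\alpha-h)=D^{-}F(\alpha)$, so $F'=D^{+}F=D^{-}F$ is both left- and right-continuous at $\alpha$; as this holds at every $\alpha>0$, $F'$ is continuous on $(0,\infty)$.

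The one point requiring a little care is that the entire chain rests on Assumption~\ref{ass:2} (weak compactness of $\psi$-bounded sequences and weak lower semicontinuity of $\varphi$ and $\psi$), which is in force throughout this subsection and is exactly what makes the existence Theorem — and hence the identification of $D^{\pm}F(\alpha)$ with values of $\psi$ on $\mathcal{M}_\alpha$ — available. Granting that, the deduction is routine and I expect no substantive difficulty; the ``hard part'', such as it is, was already done in the Theorem and in Corollary~\ref{cor:22}.
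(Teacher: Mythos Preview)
Your proposal is correct and matches the paper's approach: Corollary~\ref{cor:2} is stated immediately after Corollary~\ref{cor:22} with no separate proof, the intended argument being exactly that uniqueness of the minimizer makes $\mathcal{M}_\alpha$ a singleton, so the hypothesis of Corollary~\ref{cor:22} is trivially satisfied. Your optional self-contained unwinding through the preceding Theorem and the continuity remark on $D^{\pm}F$ is also faithful to how the paper builds up to this point.
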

The other properties for $F(\alpha)$ such as second differentiability is studied in \cite{Ito+Kunisch-choireguparanonl:92}.

 As shown in Corollaries \ref{cor:22} and \ref{cor:2}, both of the value $F(\alpha)$ and $F^{(1)}(\alpha)$ is obtained with the knowledge of $x_\alpha$, and the computation of $\displaystyle \frac{d}{d \alpha}x_\alpha$ is not required. Moreover, we obtain $F^{(2k)}(\alpha)$ and $F^{2k+1}(\alpha)$ from $\displaystyle \frac{d^k}{d \alpha^k}x_\alpha$ for $k\ge1$ provided that $K$ is linear from Hilbert space $X$ to another Hilbert space $Y$, $\varphi(x,y^\delta) = \|Kx-y^\delta\|_Y^2$ and $\psi(x) = (Bx,x)_X$ with a symmetric operator $B$ such that (\ref{TikhonovFunctional}) has a unique solution, and $X=Q_{ad}$, i.e, no constraint is imposed.
\begin{thm}\label{thm:F2k}
The function $F(\alpha)$ is infinitely differentiable at every $\alpha>0$. The derivatives $F^{(2k)}(\alpha)$ and $F^{(2k+1)}(\alpha)$ for each $k\ge 1$ are give with the $k$-th derivative $x^{(k)}_\alpha$ as
\begin{align*}
F^{(2k)}(\alpha) &= C_k ( (K^*K+\alpha B)x^{(k)}_\alpha,x^{(k)}_\alpha)_X \\
F^{(2k+1)}(\alpha) &= -C_k(1+2k) (Bx^{(k)}_\alpha,x^{(k)}_\alpha)_X,
\end{align*}
where the constants $C_k$ are recursively defined as $\displaystyle C_{k+1} = \frac{2(2k+1)}{k+1}C_k$ with $C_1=-2$.
\end{thm}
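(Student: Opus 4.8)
The plan is to use the explicit Euler--Lagrange characterization of $x_\alpha$ available in the linear--quadratic setting, and then to differentiate $F$ repeatedly while carrying along a single scalar recursion for the constants $C_k$.

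First I would record the basic structure. Write $A(\alpha):=K^*K+\alpha B$. With $J_\alpha(x)=\|Kx-y^\delta\|_Y^2+\alpha(Bx,x)_X$ on $X=Q_{\text{ad}}$, the (assumed unique) minimizer $x_\alpha$ is characterized by the normal equation $A(\alpha)x_\alpha=K^*y^\delta$; unique solvability gives that $A(\alpha)$ is injective, and we use that it is in fact boundedly invertible for every $\alpha>0$, so $x_\alpha=A(\alpha)^{-1}K^*y^\delta$. Since $A(\alpha)=A(\alpha_0)+(\alpha-\alpha_0)B$ with $A(\alpha_0)^{-1}$ bounded, a Neumann series shows $\alpha\mapsto A(\alpha)^{-1}$ is real--analytic, hence $\alpha\mapsto x_\alpha\in X$ is $C^\infty$ (indeed analytic) on $(0,\infty)$. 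Differentiating $A(\alpha)x_\alpha\equiv K^*y^\delta$ exactly $k$ times via the Leibniz rule and using $A'(\alpha)=B$, $A^{(j)}(\alpha)=0$ for $j\ge2$, I obtain the key recursion
\[
A(\alpha)\,x_\alpha^{(k)}=-\,k\,B\,x_\alpha^{(k-1)}\qquad (k\ge1),
\]
equivalently $x_\alpha^{(k)}=-k\,A(\alpha)^{-1}Bx_\alpha^{(k-1)}$.

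Next I would fix the base of the induction. By Corollary~\ref{cor:2}, unique solvability gives $F'(\alpha)=\psi(x_\alpha)=(Bx_\alpha,x_\alpha)_X$, and since $\alpha\mapsto x_\alpha$ is $C^\infty$ so is $F$. Differentiating and using the symmetry of $B$, $F''(\alpha)=2(Bx_\alpha,x_\alpha')_X$; substituting $Bx_\alpha=-A(\alpha)x_\alpha'$ (the recursion with $k=1$) gives $F''(\alpha)=-2\,(A(\alpha)x_\alpha',x_\alpha')_X$, the asserted formula for $k=1$ with $C_1=-2$. The induction hypothesis at level $k$ is the pair
\[
F^{(2k)}(\alpha)=C_k\,(A(\alpha)x_\alpha^{(k)},x_\alpha^{(k)})_X,\qquad
F^{(2k+1)}(\alpha)=-C_k(2k+1)\,(Bx_\alpha^{(k)},x_\alpha^{(k)})_X .
\]
The inductive step consists of two one-line differentiations. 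Differentiating the even identity and using $A'(\alpha)=B$ and the self-adjointness of $A(\alpha)$ gives $C_k(Bx_\alpha^{(k)},x_\alpha^{(k)})_X+2C_k(A(\alpha)x_\alpha^{(k+1)},x_\alpha^{(k)})_X$; replacing $A(\alpha)x_\alpha^{(k+1)}=-(k+1)Bx_\alpha^{(k)}$ by the recursion collapses this to $-C_k(2k+1)(Bx_\alpha^{(k)},x_\alpha^{(k)})_X$, the odd identity. Differentiating the odd identity and using the symmetry of $B$ gives $-2C_k(2k+1)(Bx_\alpha^{(k)},x_\alpha^{(k+1)})_X$; substituting $Bx_\alpha^{(k)}=-\tfrac1{k+1}A(\alpha)x_\alpha^{(k+1)}$ gives $\tfrac{2(2k+1)}{k+1}C_k\,(A(\alpha)x_\alpha^{(k+1)},x_\alpha^{(k+1)})_X$, which is the even identity at level $k+1$ with $C_{k+1}=\tfrac{2(2k+1)}{k+1}C_k$. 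This closes the induction and yields the theorem.

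The only genuinely substantive step is the first one: the $C^\infty$ (analytic) dependence of $x_\alpha$ on $\alpha$ with values in $X$, which rests on the bounded invertibility of $A(\alpha)$ together with the Neumann-series argument, and on the Leibniz recursion it produces. Once that recursion is in hand, every derivative of $F$ is bookkeeping with the two symmetric bilinear forms $(A(\alpha)\,\cdot,\cdot)_X$ and $(B\,\cdot,\cdot)_X$. The one point deserving care is that the unique-solvability hypothesis indeed delivers bounded invertibility of $A(\alpha)$ for each $\alpha>0$ — automatic, for instance, when $B=I$ (then $A(\alpha)\ge\alpha I$) or when $B$ is coercive, and the natural reading of the hypothesis in general — after which the displayed identities follow verbatim from the recursion.
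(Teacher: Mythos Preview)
Your proof is correct and follows essentially the same route as the paper's: both rely on the recursion $A(\alpha)x_\alpha^{(k)}=-kBx_\alpha^{(k-1)}$ and then carry out the identical two-step induction on $k$. The only difference is that you derive this recursion (and the $C^\infty$ dependence of $x_\alpha$) directly via a Neumann-series argument, whereas the paper simply cites it as a lemma from Kunisch--Zou.
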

The proof is based on the following lemma in \cite{Kunisch+Zou-Iterchoiregupara:98}.
\begin{lem}[\cite{Kunisch+Zou-Iterchoiregupara:98}]\label{lem:zou}
The function $x_\alpha$ is infinitely differentiable at every $\alpha>0$ and
its derivative $x_\alpha^{(k)}$, for each $k\ge 1$, is the unique solution to the following equation:
\[
(K^*K+\alpha B)x_\alpha^{(k)}= -kB x_\alpha^{(k-1)}
\]
\end{lem}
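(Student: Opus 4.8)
The plan is to take Lemma~\ref{lem:zou} as given and to obtain both formulas by a single induction on $k$, in which the even- and odd-order derivatives alternate between a quadratic form built on $A_\alpha := K^*K+\alpha B$ and one built on $B$. Throughout I abbreviate $A_\alpha = K^*K+\alpha B$, which is symmetric, and I use the real inner product $(\cdot,\cdot)_X$.

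First I would record the two facts that launch the induction. Since the minimizer is unique for every $\alpha>0$, Corollary~\ref{cor:2} gives that $F$ is $C^1$ with $F'(\alpha)=\psi(x_\alpha)=(Bx_\alpha,x_\alpha)_X$. By Lemma~\ref{lem:zou} the map $\alpha\mapsto x_\alpha$ is $C^\infty$, and since $(Bx_\alpha,x_\alpha)_X$ is a smooth (bilinear) function of $x_\alpha$, the identity $F'(\alpha)=(Bx_\alpha,x_\alpha)_X$ shows that $F'$ is itself $C^\infty$; hence $F\in C^\infty(0,\infty)$, which is the first assertion. Differentiating once more, using the symmetry of $B$ and then the relation $A_\alpha x_\alpha^{(1)}=-Bx_\alpha$ (the $k=1$ case of Lemma~\ref{lem:zou}) together with the symmetry of $A_\alpha$, gives $F''(\alpha)=2(Bx_\alpha^{(1)},x_\alpha)_X=-2(A_\alpha x_\alpha^{(1)},x_\alpha^{(1)})_X$, which is the even formula for $k=1$ with $C_1=-2$.

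For the inductive step I would prove the two formulas in tandem. Assume $F^{(2k)}(\alpha)=C_k(A_\alpha x_\alpha^{(k)},x_\alpha^{(k)})_X$. Differentiating, and using $\tfrac{d}{d\alpha}A_\alpha=B$ and the symmetry of $A_\alpha$ to merge the two cross terms, gives $\tfrac{d}{d\alpha}(A_\alpha x_\alpha^{(k)},x_\alpha^{(k)})_X=(Bx_\alpha^{(k)},x_\alpha^{(k)})_X+2(A_\alpha x_\alpha^{(k+1)},x_\alpha^{(k)})_X$. I then substitute the relation $A_\alpha x_\alpha^{(k+1)}=-(k+1)Bx_\alpha^{(k)}$ from Lemma~\ref{lem:zou} into the cross term; the two contributions collapse to $-(1+2k)(Bx_\alpha^{(k)},x_\alpha^{(k)})_X$, yielding the odd formula $F^{(2k+1)}(\alpha)=-C_k(1+2k)(Bx_\alpha^{(k)},x_\alpha^{(k)})_X$. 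Differentiating this, the symmetry of $B$ gives $\tfrac{d}{d\alpha}(Bx_\alpha^{(k)},x_\alpha^{(k)})_X=2(Bx_\alpha^{(k+1)},x_\alpha^{(k)})_X$; now using the same relation rearranged, $Bx_\alpha^{(k)}=-\tfrac{1}{k+1}A_\alpha x_\alpha^{(k+1)}$, converts this into $-\tfrac{2}{k+1}(A_\alpha x_\alpha^{(k+1)},x_\alpha^{(k+1)})_X$. Collecting the constants gives $F^{(2k+2)}(\alpha)=\tfrac{2(2k+1)}{k+1}C_k(A_\alpha x_\alpha^{(k+1)},x_\alpha^{(k+1)})_X$, so setting $C_{k+1}=\tfrac{2(2k+1)}{k+1}C_k$ closes the induction and reproduces exactly the stated recursion.

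I expect no deep obstacle, since the existence and the equation for $x_\alpha^{(k)}$ are supplied by Lemma~\ref{lem:zou} and the rest is bookkeeping. The one point requiring care is the index shift: each differentiation raises the order of $x_\alpha^{(k)}$ by one, and it is precisely the substitution of Lemma~\ref{lem:zou}, used in its two directions $A_\alpha x_\alpha^{(k+1)}=-(k+1)Bx_\alpha^{(k)}$ and its rearrangement, that alternately eliminates the $A_\alpha$-form in favour of the $B$-form and back, while producing the $k$-dependent factors $(1+2k)$ and $\tfrac{2}{k+1}$ whose product is the recursion constant. One must also check that differentiation may be passed under the inner product, which is immediate from the $C^\infty$-dependence of $x_\alpha$ on $\alpha$ granted by Lemma~\ref{lem:zou}.
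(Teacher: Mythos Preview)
Your write-up is not a proof of Lemma~\ref{lem:zou} at all: you explicitly take that lemma as given and instead establish the formulas for $F^{(2k)}$ and $F^{(2k+1)}$, which is the content of Theorem~\ref{thm:F2k}. The paper does the same thing---Lemma~\ref{lem:zou} is merely quoted from \cite{Kunisch+Zou-Iterchoiregupara:98} without proof, and the argument that follows it is labeled ``Proof of Theorem~\ref{thm:F2k}.''

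Read as a proof of Theorem~\ref{thm:F2k}, your proposal is correct and essentially identical to the paper's: the base case $F''(\alpha)=2(Bx_\alpha^{(1)},x_\alpha)_X=-2(A_\alpha x_\alpha^{(1)},x_\alpha^{(1)})_X$, then an induction in which one differentiates the $A_\alpha$-quadratic form, invokes $A_\alpha x_\alpha^{(k+1)}=-(k+1)Bx_\alpha^{(k)}$ to obtain the odd-order formula, and uses the rearranged relation $Bx_\alpha^{(k)}=-\tfrac{1}{k+1}A_\alpha x_\alpha^{(k+1)}$ to close the loop with $C_{k+1}=\tfrac{2(2k+1)}{k+1}C_k$. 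Your version adds two small clarifications the paper leaves implicit: that $F\in C^\infty$ follows from $F'(\alpha)=(Bx_\alpha,x_\alpha)_X$ together with the smoothness of $x_\alpha$, and the explicit product-rule contribution from $\tfrac{d}{d\alpha}A_\alpha=B$.
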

\begin{proof}[Proof of Theorem \ref{thm:F2k}]
From Lemma \ref{lem:zou} with $k=1$, one obtains $F^{(2)}=2(Bx_\alpha^{(1)},x_\alpha)_X=-2((K^*K+\alpha B)x_\alpha^{(1)},
x_\alpha^{(1)})_X$.

Suppose $F^{(2k)}(\alpha) = C_k((K^*K+\alpha B)x_\alpha^{(k)},x_\alpha^{(k)})_X $ with a constant $C_k$, then Lemma \ref{lem:zou} yields
\begin{align*}
F^{(2k+1)}(\alpha)&=2C_k((K^*K+\alpha B)x_\alpha^{(k+1)},x_\alpha^{(k)})_X +
C_k(Bx_\alpha^{(k)},x_\alpha^{(k)})_X\\
&=2C_k(-(k+1)Bx_\alpha^{(k)},x_\alpha^{(k)})_X +
C_k(Bx_\alpha^{(k)},x_\alpha^{(k)})_X\\
&=-C_k(1+2k)(Bx_\alpha^{(k)},x_\alpha^{(k)})_X.
\end{align*}
Then we have
\begin{align*}
F^{(2k+2)}(\alpha) &= -2C_k(1+2k)(Bx_\alpha^{(k+1)},x_\alpha^{(k)})_X\\
&= \frac{2C_k(1+2k)}{k+1}(x_\alpha^{(k+1)},-(k+1)Bx_\alpha^{(k)})_X\\
&=C_{k+1}(x_\alpha^{(k+1)},(K^*K+\alpha B)x_\alpha^{(k+1)})_X.
\end{align*}
where $C_{k+1}$ is defined as $ \displaystyle C_{k+1}=\frac{2C_k(2k+1)}{k+1}$. By induction the assertion is valid.
\end{proof}

\section{Pad\'e approximations as model functions for linear inverse problems.}\label{pade}
In this section, we propose a model function for the value function for linear inverse problems. Firstly, we give a motivation for using the model function approach.

A principle for determining a regularization parameter often requires solving an equation, for example, Morozov discrepancy principle takes the parameter $\alpha^*$ that satisfies the equation $\|Kx-y^\delta\|_X=\delta$ with noisy data $y^\delta$ of noise level $\delta$.
One can apply a Newton type iteration to solve the equation, however, the iteration could be numerically expensive. One strategy to reduce the computational effort is that: First, represent the equation in terms of the value function as  $F(\alpha)-\alpha F'(\alpha)=0$. Then, construct a model function $m(\alpha)$ to $F(\alpha)$ and find the parameter $\alpha_m$ that satisfies the equation $m(\alpha_m)-\alpha_m m'(\alpha_m)=0$. If the model function approximates to the value function $F$, the parameter $\alpha_m$ thus determined will give a close approximation to $\alpha^*$.

We assume that $K$ is a linear operator form Hilbert space $X$ and a Hilbert space $Y$, $Q_{ad}=X$ (i.e., no constraint imposed), $\varphi(x,y^\delta) = \|Kx-y^\delta\|_{Y}^2$ and $\psi(x) = (Bx,x)_{X}$ with a symmetric operator $B$ such that (\ref{TikhonovFunctional}) has a unique solution. The solution is written as
\[
x_\alpha = (K^*K+\alpha B)^{-1}K^* y^\delta.
\]
As we see in Theorem \ref{thm:F2k}, the minimum value function $F(\alpha)$ is infinitely differentiable and thus it is reasonable to consider a rational function as a model function, which is briefly mentioned in \cite{Vasileva-Ratifuncsoluline:99}. We propose our model function to $F(\alpha)$ of the particular form
\[
m(\alpha)=\|y^\delta\|_Y^2\frac{P(\alpha)}{Q(\alpha)}=\|y^\delta\|_Y^2\frac{\alpha^n + p_{n-1}\alpha^{n-1}+
\cdots p_1\alpha +p_0}{\alpha^n + q_{n-1}\alpha^{n-1}+
\cdots q_1\alpha +q_0}.
\]
The derivation of our proposed model function to $F(\alpha)$ bases on the following discussion:
Just for simplicity we assume that $X=Y=R^\ell$ and $B=I$, although our discussion is valid for the infinite dimension framework. The singular value decomposition of $K$ yields
\[
K=U\Sigma V^T,
      \quad \Sigma = \text{diag}(\sigma_1,\ldots,\sigma_\ell),
\]
where $\sigma_1\ge\cdots \ge \sigma_r > \sigma_{r+1}=\cdots =\sigma_\ell = 0$ ($r=\mbox{rank}(K))$ are the singular values and $U=[u_1,\ldots, u_\ell]$ and $V=[v_1,\ldots,v_\ell]$ are the orthogonal matrices, respectively. Then $F(\alpha)$ is represented as
\begin{equation}\label{svdF}
F(\alpha) = \sum_{k=1}^{\ell}\frac{\alpha}{\sigma_k^2+\alpha} (u_k^T y^\delta)^2
=\Vert y^\delta\Vert_{\R^\ell}^2-\sum_{k=1}^{r}\frac{\sigma_k^2}{\sigma_k^2+\alpha} (u_k^T y^\delta)^2.
\end{equation}
Since we assume that $A$ is highly ill-conditioned, the singular value $\sigma_k$ decreases rapidly as $k$ increases.
As a result only the first few $n$-terms satisfying $n<<r$ in (\ref{svdF}) will contribute to the sum. Thus we drop off the remaining terms and obtain the approximation
\begin{equation}\label{svdF2}
F(\alpha)\approx {\Vert y^\delta\Vert_{\R^\ell}^2} \left(1-\sum_{k=1}^{n}\frac{\sigma_k^2}{\sigma_k^2+\alpha}\frac{c_k^2}{\Vert y^\delta\Vert_{\R^\ell}^2}\right)
=\Vert y^\delta\Vert_{\R^\ell}^2 \frac{\alpha^n + p_{n-1}\alpha^{n-1}+
\cdots p_1\alpha +p_0}{\alpha^n + q_{n-1}\alpha^{n-1}+
\cdots q_1\alpha +q_0}.
\end{equation}

The Pad\'e approximation to $F(\alpha)$ is constructed through the use of several minimizing elements $x_\alpha$ for different values of regularization parameter $\alpha$.

For a given interval $I$, let $\{\alpha_1,\alpha_2,...,\alpha_n\}\subset I$ are $n$ distinct parameters, which we call reference points in the following. We compute the function values and its derivatives at the reference points to determine $2n$ unknowns $p_0,\ldots,p_{n-1}, q_0,\ldots q_{n-1}$ in $m(\alpha)$ by the linear system
 \begin{equation}\label{sym:F}
 m(\alpha_i)=F(\alpha_i),\qquad m^{(1)}(\alpha_i)=F^{(1)}(\alpha_i), \qquad  i=1,2,\ldots,n.
\end{equation}
The more accurate model function will be obtained by imposing further conditions on the higher derivatives to $m(\alpha)$, i.e., we determine $4n$ unknowns $p_0,\ldots,p_{2n-1}, q_0,\ldots q_{2n-1}$ by the system
 \begin{align}\label{sym:F_more}
 m(\alpha_i)&=F(\alpha_i), \quad m^{(1)}(\alpha_i) =F^{(1)}(\alpha_i),\nonumber\\
  m^{(2)}(\alpha_i)&=F^{(2)}(\alpha_i), \quad m^{(3)}(\alpha_i) =F^{(3)}(\alpha_i), \qquad  i=1,2,\ldots,n.
\end{align}
For the solvability of the systems (\ref{sym:F}) and (\ref{sym:F_more}) one can refer to \cite{Baker+Graves-Morris-Padeappr:96}. In the next section, we demonstrate that the model function approximates $F(\alpha)$ in the interval $[\alpha_1,\alpha_n]$.
\begin{rem}
The differentiation of $F^{(1)}(\alpha)$ yields $F^{(2)}(\alpha) = 2(Bx_\alpha^{(1)}, x_\alpha)$, and thus $ F^{(3)}(\alpha) =  2(Bx_\alpha^{(2)}, x_\alpha)+ 2(Bx_\alpha^{(1)}, x_\alpha^{(1)})$. Therefore it seems that the computation of $x_\alpha^{(2)}$ is required for the evaluation of $F^{(3)}(\alpha)$, however, from Theorem \ref{thm:F2k} it is enough to solve the equation for $x_{\alpha}^{(1)}$ for the evaluation of $F^{(3)}(\alpha)$ and is not necessary to compute $x_\alpha^{(2)}$.
\end{rem}
\section{Numerical tests}\label{numerics}
In this section, we present numerical tests to illustrate the efficiency of the proposed method using a linear inverse problems -- "{\bf heat}" 
which is generated from the Matlab package developed by Hansen \cite{Hansen-ReguToolvers:07}. 
$\varphi$ and $\psi$ are of the forms $\varphi(x,y^\delta)=\|Kx-y^\delta\|^2_{\R^{50}}$ with $50\times50$ matrix $K$ and $\psi(x)=\|x\|^2_{\R^{50}}$.

We use four conventional principles to determine the regularization parameters: the Morozov discrepancy principle, the damped Morozov principle, L-curve criterion and the minimum product criterion.
\\
{\bf The discrepancy principle} (\cite{Morozov-Methsolvincopose:84}) gives the regularization parameter $\alpha_{D}$ as the solution of the equation
\begin{equation}\label{morozov}
\varphi(x_\alpha,y^\delta)=\delta^2.
\end{equation}
{\bf The damped discrepancy principle} (\cite{Kunisch-clasdampMoroprin:93}) is a modification of the discrepancy principle that consists in determining $\alpha_{DP}$ such that
\begin{equation}\label{Dmorozov}
\varphi(x_\alpha,y^\delta) +  \alpha^\gamma\psi(x_\alpha)=\delta^2,
\end{equation}
In this test, we fix $\gamma=1$.\\
{\bf L-curve criterion} proposed in \cite{Hansen+OLeary-regudiscill-prob:93} gives the regularization parameter $\alpha_{L}$ which attains the largest magnitude of curvature $\kappa(\alpha)$ of the curve
   \[
   \{ (\log \psi(x_\alpha), \log \varphi(x_\alpha,y^\delta) ) \ \mid \ \alpha > 0 \}.
   \]
The curvature $\kappa(\alpha)$ can be written as \cite{Vogel-Non-reguparasele:96}
 \begin{equation}\label{L}
\kappa(\alpha)=\frac{\varphi\alpha\psi}{(\varphi^2 + \alpha^2\psi^2)^{\frac{3}{2}}}\left(\varphi+\alpha\psi+\frac{\varphi\psi}{\alpha\psi'}\right).
\end{equation}\\
{\bf The minimum product criterion }\cite{Reginska-reguparadiscill-:96} takes the regularization parameter $\alpha_{MP}$ as a local minimum point of the function
\begin{equation}\label{product}
\Psi_\gamma(\alpha):=\varphi(x_\alpha,y^\delta)^\gamma\psi(x_\alpha),\quad \gamma>0.
\end{equation}
In this test, we fix $\gamma=1$.\\
We rewrite all the principles in terms of $F$ by using the equations $\varphi(\alpha) = F(\alpha)-\alpha F'(\alpha)$, $\psi(\alpha) = F'(\alpha)$ and $\psi'(\alpha) = F''(\alpha)$ to employ model function approach.

Our purpose here is to identify the parameters $\alpha_{M},\alpha_{DM},\alpha_{L},\alpha_{MP}$ numerically with high accuracy by employing model function approach.
Note that we do not attempt to verify the effectiveness of those conventional principles for the inverse problem.
We also note that our approach is not restricted to these principles.
We can apply the approach to any principles that can be formulated in terms of the value function.
Although the size of the matrixes $K$ for the specific example is fixed to be $50\times 50$, our approach is available for much larger scale inverse problems.

The noisy data $y^\delta$ is generated by adding random noise to the exact data $y$ so that
\[ \varepsilon:=\frac{\|y-y^\delta\|_{\R^{50}}}{\|y\|_{\R^{50}}} \in \{0.01, 0.03, 0.05\}.\]

The regularization parameter $\alpha^{M}$ in Morozov principle 
is computed for each noise level as follows: we first compute $\varphi(x_{{\alpha}},y^\delta)$ in (\ref{morozov}) for 100 uniformly distributed $\alpha$-values in the interval $[10^{-8},10^{-0.5}]$ and find apprioximate optimals $\hat{\alpha}$ and $\bar{\alpha}$ that are very close, i.e, $|\hat{\alpha}-\bar{\alpha}|<<1$ and that satisfy $\varphi(x_{\hat{\alpha}},y^\delta)<\delta^2<\varphi(x_{\bar{\alpha}},y^\delta)$. Then a much smaller interval including these parameters is choosen to compute an accurate $\alpha^M$. The other parameters $\alpha^{DM},\alpha^{L},\alpha^{MP}$ in different principles are computed in a similar manner.

In each noise level, an approximation to the optimal parameter $\alpha^M$ in Morozov principle is computed by employing model functions as follows: Firstly, we construct the model function $m_1(\alpha)$ of the form
$\disp m_1(\alpha)=\Vert y^\delta\Vert_{\R^{50}}^2 \frac{\alpha^4 + p_{3}\alpha^{3}+p_{2}\alpha^{2}+p_1\alpha +p_0}{\alpha^4 + q_{3}\alpha^{3}+q_{2}\alpha^2 + q_1\alpha +q_0}$. The eight unknowns $\{p_k\}_{k=0}^{3}$, $\{q_k\}_{k=0}^{3}$ in this model function are determined by solving the linear system (\ref{sym:F}) where the value $F(\alpha_{i})$ and $F^{(1)}(\alpha_{i})$ are obtained at four points $\alpha_{i} \in \{ 10^{-8}, 10^{-5.5}, 10^{-3}, 10^{-0.5}\}$. Then we solve the equation $m_1(\alpha)-\alpha m_1'(\alpha) = \delta^2$, which is the model function version of $\varphi(x_{{\alpha}},y^\delta)=\delta^2$. We denote the solution by $\alpha^M_{1}$. We also construct another model function
$\disp m_{2}(\alpha) = \Vert y^\delta\Vert_{\R^{50}}^2 \frac{\alpha^8 + p_{7}\alpha^{7}+\cdots+p_1\alpha +p_0}{\alpha^8 + q_{7}\alpha^{7}+\cdots+ q_1\alpha +q_0}$ whose sixteen unknowns $\{p_k\}_{k=0}^{7}$, $\{q_k\}_{k=0}^{7}$ are determined by solving the linear system (\ref{sym:F}) where up to third derivatives $F(\alpha_{i})$, $F^{(1)}(\alpha_{i})$, $F^{(2)}(\alpha_{i})$ and $F^{(3)}(\alpha_{i})$ at the same points are used. We denote the solution of the equation $m_2(\alpha)-\alpha m_2'(\alpha) = \delta^2$ by $\alpha^M_2$. The approximated parameters in the other principles are computed in similar manner using the model function $m_1(\alpha)$ and $m_2(\alpha)$ and we denote them by $\alpha^{DM}_{1}$, $\alpha^{DM}_{2}$ (damped Morozov for $\lambda=1$), $\alpha^L_{1}$, $\alpha^L_{2}$ (L-curve) and $\alpha^{MP}_{1}$, $\alpha^{MP}_{2}$ (minimum product for $\gamma=1$), respectively.
All the computed parameters $\alpha^M$, $\alpha^M_1$, $\alpha^M_2$ etc in each noise level are reported in from Table \ref{Table1} to Table \ref{Table4}.
\begin{table}[h]
\begin{center}
\begin{tabular}{cc}
\begin{minipage}{0.5\hsize}
\caption{{\footnotesize$\alpha^M$, $\alpha^M_{1}$ and $\alpha^M_{2}$. Morozov.}}
\label{Table1}
  \begin{center}\footnotesize
    \begin{tabular}{c c c c }
    \hline
    $\varepsilon$    & 0.01                 & 0.03                & 0.05 \\
      $\alpha^M$     & $3.81\times10^{-5}$  & $1.39\times10^{-4}$ & $2.68\times10^{-4}$ \\
      $\alpha^M_{1}$ & $4.22\times10^{-5}$  & $5.97\times10^{-5}$ & $7.22\times10^{-5}$ \\
      $\alpha^M_{2}$ & $3.87\times10^{-5}$  & $1.25\times10^{-4}$ & $2.55\times10^{-4}$
    \end{tabular}
  \end{center}
\end{minipage}
\begin{minipage}{0.5\hsize}
  \caption{{\footnotesize$\alpha^{DM}$, $\alpha^{DM}_{1}$ and $\alpha^{DM}_{2}$. d-Morozov.}}
  \label{Table2}
  \begin{center}\footnotesize
    \begin{tabular}{c c c c }
    \hline
    $\varepsilon$       & 0.01                 & 0.03                & 0.05 \\
      $\alpha^{DM}$     & $1.85\times10^{-6}$  & $1.17\times10^{-5}$ & $2.84\times10^{-5}$ \\
      $\alpha^{DM}_{1}$ & $1.81\times10^{-6}$  & $1.00\times10^{-5}$ & $1.84\times10^{-5}$ \\
      $\alpha^{DM}_{2}$ & $1.85\times10^{-6}$  & $1.17\times10^{-5}$ & $2.74\times10^{-5}$
    \end{tabular}
  \end{center}
  \end{minipage}
  \end{tabular}
  \end{center}
\end{table}
\begin{table}[h]
\begin{center}
\begin{tabular}{cc}
\begin{minipage}{0.5\hsize}
  \caption{{\footnotesize$\alpha^{L}$, $\alpha^{L}_{1}$ and $\alpha^{L}_{2}$. L-curve.}}
  \label{Table3}
  \begin{center}\footnotesize
    \begin{tabular}{c c c c }
    \hline
    $\varepsilon$      & 0.01                 & 0.03                & 0.05 \\
      $\alpha^{L}$     & $2.80\times10^{-6}$  & $2.84\times10^{-5}$ & $7.22\times10^{-5}$ \\
      $\alpha^{L}_{1}$ & $3.05\times10^{-6}$  & $5.69\times10^{-6}$ & $3.86\times10^{-4}$ \\
      $\alpha^{L}_{2}$ & $2.52\times10^{-6}$  & $2.01\times10^{-5}$ & $9.36\times10^{-5}$
    \end{tabular}
  \end{center}
\end{minipage}
\begin{minipage}{0.5\hsize}
  \caption{{\footnotesize$\alpha^{MP}$, $\alpha^{MP}_{1}$ and $\alpha^{MP}_{2}$. Minimum product.}}
  \label{Table4}
  \begin{center}\footnotesize
    \begin{tabular}{c c c c }
    \hline
    $\varepsilon$       & 0.01                 & 0.03                & 0.05 \\
      $\alpha^{MP}$     & $8.50\times10^{-7}$  & $1.90\times10^{-5}$ & $7.22\times10^{-5}$ \\
      $\alpha^{MP}_{1}$ & $1.33\times10^{-6}$  & $9.72\times10^{-6}$ & $1.39\times10^{-4}$ \\
      $\alpha^{MP}_{2}$ & $8.79\times10^{-7}$  & $1.81\times10^{-5}$ & $7.47\times10^{-5}$
    \end{tabular}
  \end{center}
    \end{minipage}
  \end{tabular}
  \end{center}
\end{table}
The authors observed that the first derivative $m'_1(\alpha)$ did not approximate to $F'(\alpha)$.
On the other hand $m'_2(\alpha)$ was observed to give a very good approximation to $F'(\alpha)$.

The equation (\ref{Dmorozov}) is written in terms of $F$ as $F(\alpha) = \delta^2$. This means that the parameter $\alpha^{DM}$ is determined using only $F$. Thus it is enough to give a good approximation to $F$ to compute an approximation to $\alpha^{DM}$. As we expect, the parameters $\alpha^{DM}_1$ and $\alpha^{DM}_2$ in damped Morozov principle are very good approximations to $\alpha^{DM}$.

The parameters $\alpha^M_1$, $\alpha^L_1$ and $\alpha^{MP}_1$ for all nose level are not so accurate. This is because the equations (\ref{morozov}), (\ref{L}), (\ref{product}) contain the first derivative of $F$ and $m'_1(\alpha)$ does not approximate to $F'(\alpha)$.

To give a better approximation to $\alpha^M$, $\alpha^L$ and $\alpha^{MP}$, the model function must approximate to $F'$ in high accuracy and our model function $m_2(\alpha)$ will be the candidate.
Table \ref{Table1} and \ref{Table4} show that the parameters determined using $m_2(\alpha)$ are very accurate. On the other hand, $\alpha^L_2$ are not so accurate, although they are acceptably close to $\alpha^L$. An accurate second derivative of $F$ is also required for determining the parameter $\alpha^L$. 
Figure \ref{fig: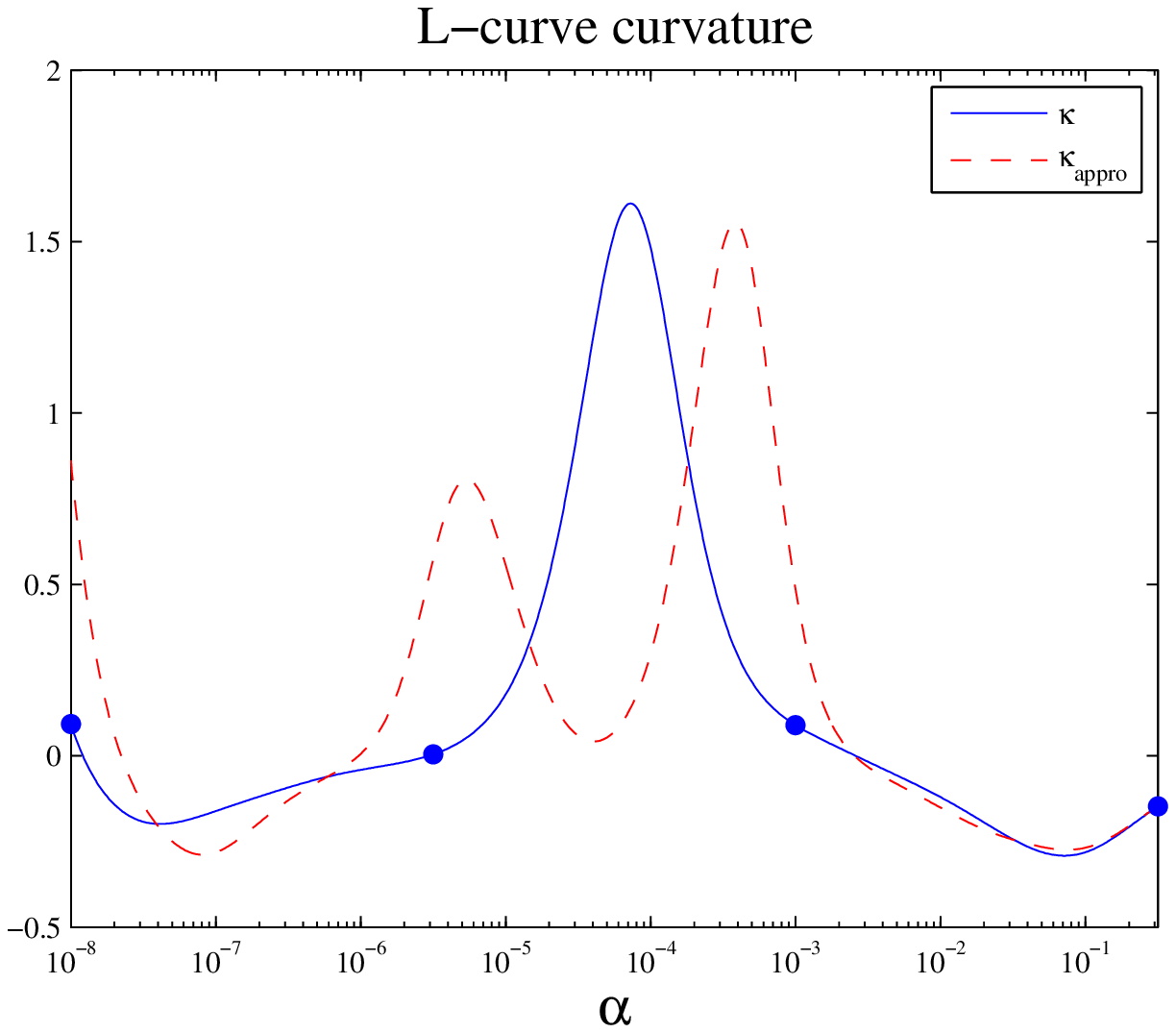} and \ref{fig: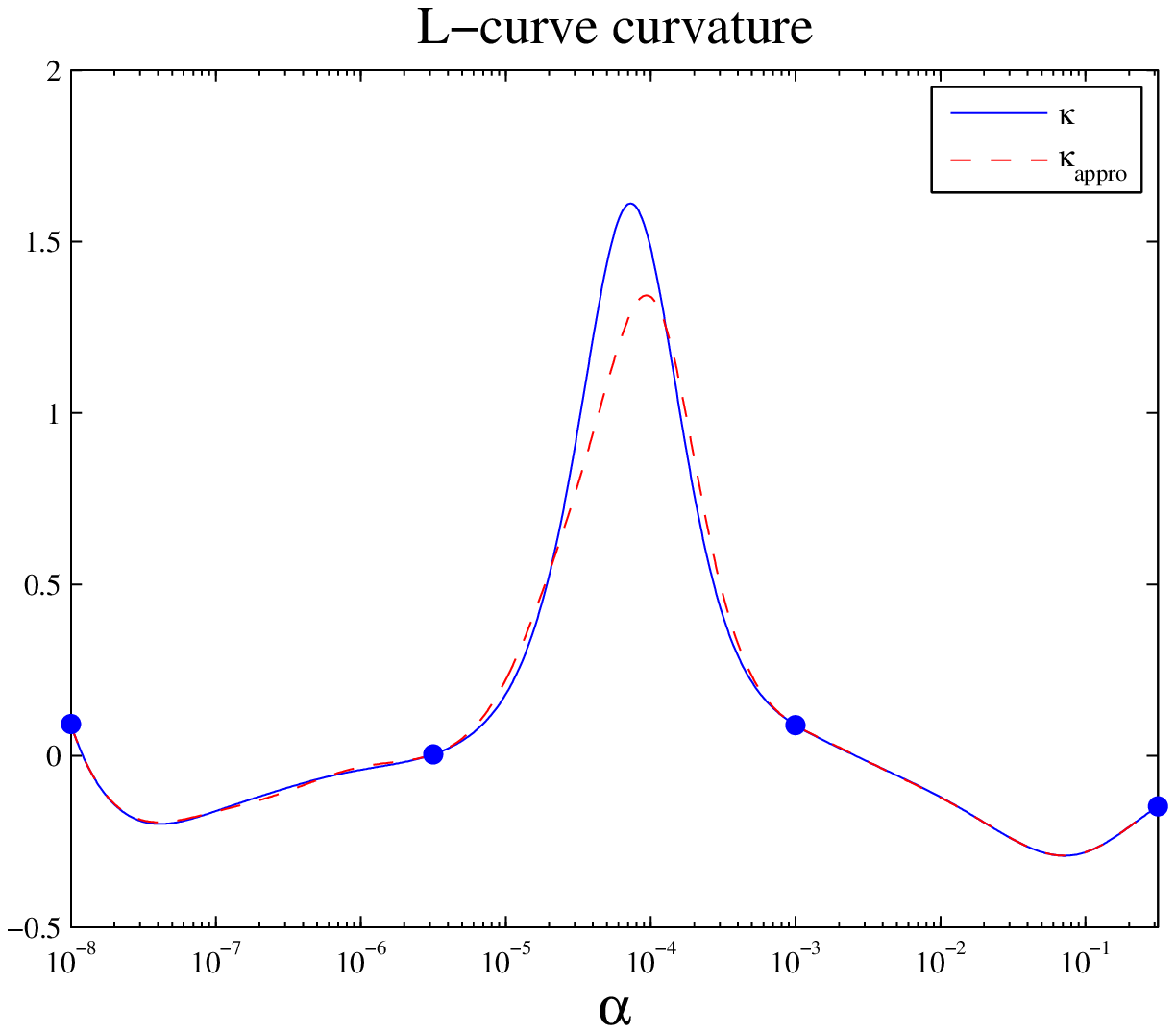} show the curvature of the L-curve $\kappa(\alpha)$ with its numerical approximations $\kappa_{appro}(\alpha)$ obtained by using $m_{1}({\alpha})$ and $m_{2}(\alpha)$ respectively when the error $\varepsilon = 0.05$. The four reference points $\alpha_i\in\{ 10^{-8}, 10^{-5.5}, 10^{-3}, 10^{-0.5}\}$ used to construct $m_{1}({\alpha})$ and $m_{2}(\alpha)$ are depicted by bullets ($\bullet$) on the curve $\kappa(\alpha)$ in each Figure.
\begin{figure}[h]
\begin{minipage}{0.33\hsize}
\begin{center}
  \includegraphics[width=\hsize]{L1_4points.eps}
  \end{center}
  \caption{\footnotesize $\kappa(\alpha)$ (solid line $-$) and $\kappa_{appro}(\alpha)$ (dashed line $- -$) obtained by using $m_{1}(\alpha)$ with four reference points $\alpha_i$ $(\bullet)$.} \label{fig:L1_4points.eps}
  \end{minipage}
  \begin{minipage}{0.33\hsize}
\begin{center}
  \includegraphics[width=\hsize]{L3_4points.eps}
    \end{center}
   \caption{\footnotesize $\kappa(\alpha)$ (solid line $-$) and $\kappa_{appro}(\alpha)$ (dashed line $- -$) obtained by using $m_{2}(\alpha)$ with four reference points $\alpha_i$ $(\bullet)$.}  \label{fig:L3_4points.eps}
  \end{minipage}
  \begin{minipage}{0.33\hsize}
  \begin{center}
    \includegraphics[width=\hsize]{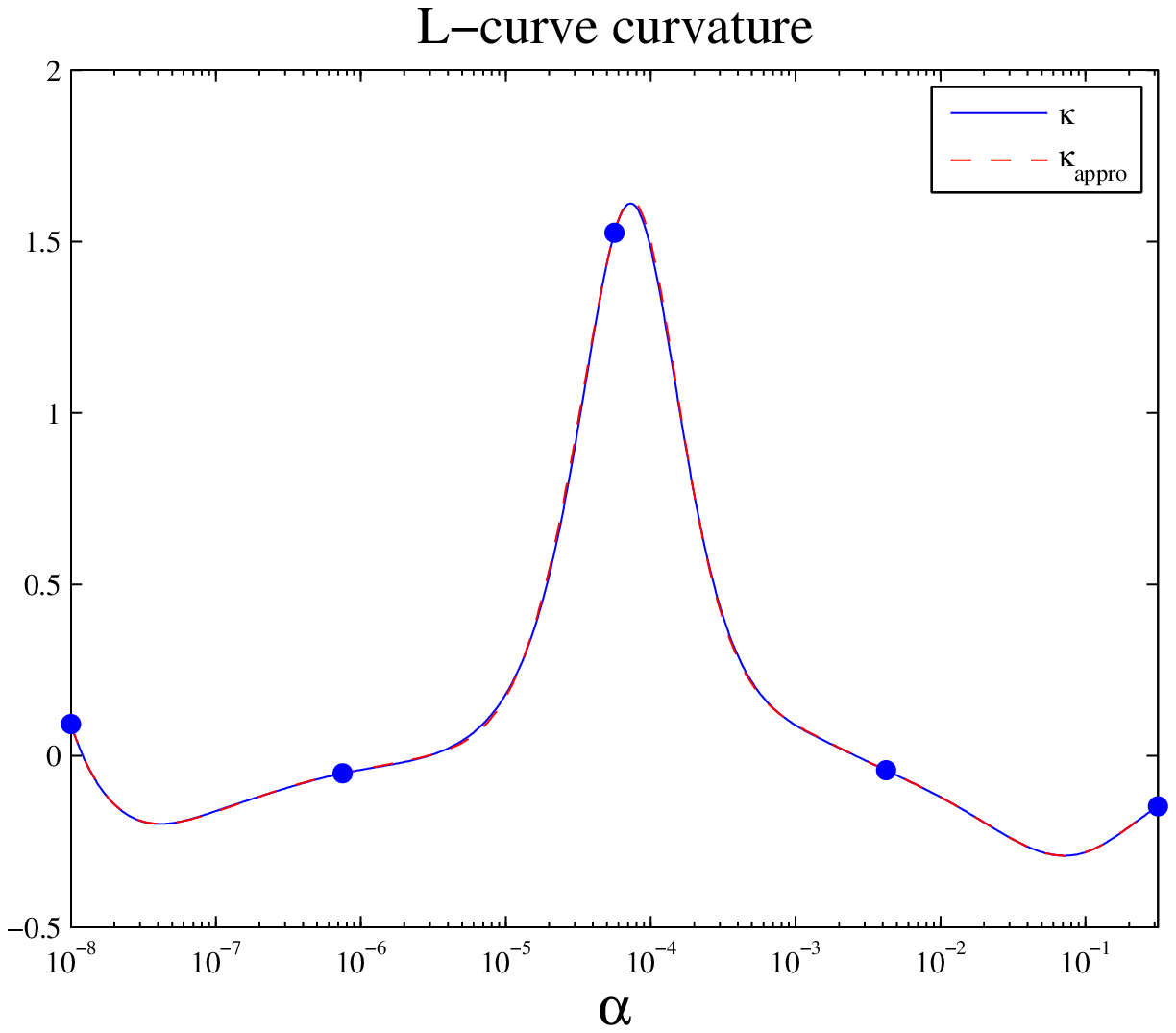}
    \end{center}
   \caption{\footnotesize $\kappa(\alpha)$ (solid line $-$) and $\kappa_{appro}(\alpha)$ (dashed line $- -$) obtained by using $m_{3}(\alpha)$ with five reference points $\alpha_i$ $(\bullet)$.}  \label{fig:L3_5points.eps}
   \end{minipage}
\end{figure}
The approximation $\kappa_{appro}(\alpha)$ ( dashed line $- -$ ) in Figure \ref{fig:L1_4points.eps} completely fails to approximate to $\kappa(\alpha)$.
We used $F(\alpha_{i})$ and $F^{(1)}(\alpha_{i})$ for the construction of the model function $m_{1}(\alpha)$, and thus the second derivative $m_{1}^{(2)}(\alpha)$ can not approximate to $F^{(2)}(\alpha)$ which is contained in $\kappa(\alpha)$. On the other hand, our $m_2(\alpha)$ gives better approximation to $\kappa$ as shown in Figure \ref{fig:L3_4points.eps}, although $\kappa_{appro}(\alpha)$ does not much perfectly with $\kappa$. To give more accurate $\kappa_{appro}(\alpha)$, we construct another model function of the form
$\disp m_3(\alpha) =\Vert y^\delta\Vert_{\R^{50}}^2 \frac{\alpha^{10}+ p_{9}\alpha^{9}+\cdots+p_1\alpha +p_0}{\alpha^{10} + q_{9}\alpha^{9}+\cdots+ q_1\alpha +q_0}$ using $F(\alpha_{i})$, $F^{(1)}(\alpha_{i})$, $F^{(2)}(\alpha_{i})$ and $F^{(3)}(\alpha_{i})$ at five reference points $\alpha_{i}\in\{10^{-8}, 10^{ -6.125}, 10^{-4.25}, 10^{-2.375}, 10^{-0.5}\}$.

Figure \ref{fig:L3_5points.eps} depicts the curvature of the L-curve $\kappa(\alpha)$ with its numerical approximations $\kappa_{appro}(\alpha)$ obtained by using $m_{3}(\alpha)$. The five reference points are also shown in the Figure.
We observe that the model function $m_{3}(\alpha)$ yields the sufficiently good approximation $\kappa_{appro}(\alpha)$ that almost perfectly matches with the exact curvature $\kappa(\alpha)$. This observation suggests that we should use more reference points to construct model function when we employ a principle that contains second derivatives of the value function.

\begin{rem}
In a practical situation, an interval where a regularization parameter to be found is often much smaller than the interval $(10^{-8}, 10^{-0.5})$ used for our numerical test. 
If it is the case it is enough to construct a model function in a smaller interval.
The number of reference points to be used for the construction of a model function can be reduced to two or three. 
 \end{rem}

\section{A new criterion of the choice of the regularization parameter}\label{sec:ParaF}
We propose a new criterion for the regularization parameter.
Let us introduce a function $\Gamma_\gamma(\alpha)$ defined as
\begin{equation}\label{NewF}
\Gamma_\gamma(\alpha)
=\frac{\gamma^\gamma}{(1+\gamma)^{1+\gamma}}\frac{F^{1+\gamma}(\alpha)}{\alpha},
\end{equation}
where $\gamma$ is a positive constant.

Our new criterion takes the parameter as
a local minimum $\alpha$ of $ \Gamma_\gamma(\alpha)$.
Since
$\displaystyle
\frac{d \Gamma_\gamma(\alpha)}{d\alpha} =\frac{\gamma^\gamma}{(1+\gamma)^{1+\gamma}} F^\gamma(\alpha)((1+\gamma)\alpha F'(\alpha)-F(\alpha))\alpha^{-2},
$
and $F(\alpha)>0$ for all $\alpha>0$, $\alpha_\gamma$ solves the equation $(1+\gamma)\alpha F'(\alpha)-F(\alpha)=0$.

The criterion is similar to the minimum product criterion by Regi\'{n}ska \cite{Reginska-reguparadiscill-:96}.
First, we note that the energy function $\Psi_\gamma(\alpha)$ in (\ref{product}) is written in terms of $F(\alpha)$
\begin{equation}\label{MinimumF}
\Psi_\gamma(\alpha)=(F(\alpha)-\alpha F'(\alpha))^\gamma F'(\alpha).
\end{equation}
Suppose that $F''(\alpha)$ exists and $F''(\alpha)<0$. (A sufficient condition for the existence of the second derivative and the negativity can be found in \cite{Ito+Kunisch-choireguparanonl:92}.)
Since
$\ds\frac{d}{d\alpha}\Psi_\gamma(\alpha)=(F(\alpha)-\alpha F'(\alpha))^{\gamma-1}F''(\alpha)(F(\alpha)-(1+\gamma)\alpha F'(\alpha))=\varphi(x_\alpha,y^\delta)^{\gamma-1}F''(\alpha)(F(\alpha)-(1+\gamma)\alpha F'(\alpha))$, the regularization parameter determined by the criterion solves the
equation $(1+\gamma)\alpha F'(\alpha)-F(\alpha)=0$.

The relationship between (\ref{NewF}) and (\ref{MinimumF}) follows from the next Proposition.
\begin{Prop}
Let $\gamma>0$ be a positive number.
\begin{equation}
\Psi_\gamma(\alpha)
\le\Gamma_\gamma(\alpha), \quad \text{ for all } \alpha>0.
\end{equation}
The equality holds if and only if $\alpha$ solves the equation
$\varphi(\alpha) - \gamma\alpha\psi(\alpha)=0$.
\end{Prop}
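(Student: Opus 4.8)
The plan is to recognise the stated inequality as a disguised form of the weighted arithmetic--geometric mean inequality. Fix $\alpha>0$ and choose any minimizer $x_\alpha\in\mathcal{M}_\alpha$, so that by definition $F(\alpha)=J_\alpha(x_\alpha)=\varphi(x_\alpha,y^\delta)+\alpha\,\psi(x_\alpha)$. I would set $a:=\varphi(x_\alpha,y^\delta)\ge 0$ and $b:=\alpha\,\psi(x_\alpha)\ge 0$ (both $\varphi$ and $\psi$ being nonnegative, which is also consistent with $F(\alpha)>0$). Then $F(\alpha)=a+b$, $\Psi_\gamma(\alpha)=a^\gamma\psi(x_\alpha)=a^\gamma b/\alpha$, and, since $\alpha>0$, the assertion $\Psi_\gamma(\alpha)\le\Gamma_\gamma(\alpha)$ is equivalent to the purely algebraic inequality
\[
a^\gamma b\ \le\ \frac{\gamma^\gamma}{(1+\gamma)^{1+\gamma}}\,(a+b)^{1+\gamma},\qquad a,b\ge 0 .
\]

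To prove this last inequality I would argue as follows. If $a=0$ both sides vanish, so assume $a>0$ and divide through by $a^{1+\gamma}$; writing $t:=b/a\ge 0$, the inequality becomes $t\le \gamma^\gamma(1+t)^{1+\gamma}/(1+\gamma)^{1+\gamma}$. Taking $(1+\gamma)$-th roots, it suffices to show $t^{1/(1+\gamma)}\le \gamma^{\gamma/(1+\gamma)}(1+t)/(1+\gamma)$, and this is precisely the weighted AM--GM inequality $x^{p}y^{q}\le px+qy$ with weights $p=\gamma/(1+\gamma)$ and $q=1/(1+\gamma)$ (note $p+q=1$) applied to $x=1/\gamma$ and $y=t$: the right-hand side is $p/\gamma+qt=q(1+t)=(1+t)/(1+\gamma)$ and the left-hand side is $\gamma^{-\gamma/(1+\gamma)}t^{1/(1+\gamma)}$. (Equivalently, one can verify by elementary one-variable calculus that $g(t):=\gamma^\gamma(1+t)^{1+\gamma}/(1+\gamma)^{1+\gamma}-t$ is convex, that $g'(t)=0$ only at $t=1/\gamma$, and that $g(1/\gamma)=0$, so $g\ge 0$ everywhere.)

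It then remains to track the equality case. Equality in $x^p y^q\le px+qy$ holds if and only if $x=y$, i.e. $1/\gamma=t=b/a$, i.e. $a=\gamma b$, which in the original notation reads $\varphi(x_\alpha,y^\delta)=\gamma\,\alpha\,\psi(x_\alpha)$, that is $\varphi(\alpha)-\gamma\alpha\psi(\alpha)=0$. In the degenerate situation $a=0$, equality forces $\Gamma_\gamma(\alpha)=0$, hence $F(\alpha)=a+b=0$ (which is actually excluded since $F(\alpha)>0$), so this case produces no equality and $\varphi(\alpha)-\gamma\alpha\psi(\alpha)=-b\neq 0$ there — again consistent with the characterization. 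I do not anticipate any genuine obstacle: the computation is a one-line AM--GM once the substitution is made; the only points needing a little care are the boundary case $a=0$ and, when the minimizer of $J_\alpha$ is not unique, the understanding that the inequality and its equality condition are to be read for each $x_\alpha\in\mathcal{M}_\alpha$ separately (equivalently, via $\varphi(\alpha)=F(\alpha)-\alpha F'(\alpha)$ and $\psi(\alpha)=F'(\alpha)$ at points where $F$ is differentiable).
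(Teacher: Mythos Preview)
Your proof is correct and is essentially the same approach as the paper's: both recognise the inequality as an instance of Young's inequality (equivalently, weighted AM--GM). The paper applies $ab\le a^p/p+b^q/q$ directly with the somewhat opaque substitution $a=\varphi^{\gamma/(1+\gamma)}\alpha^{-\gamma/(2(1+\gamma))}$, $b=(\gamma\psi)^{1/(1+\gamma)}\alpha^{1/(2(1+\gamma))}$, whereas you first reduce to the scale-free form $a^\gamma b\le \gamma^\gamma(1+\gamma)^{-(1+\gamma)}(a+b)^{1+\gamma}$ and then normalise by $a^{1+\gamma}$ before invoking AM--GM; your route is a bit cleaner but the underlying mechanism and the equality case ($\varphi=\gamma\alpha\psi$) are identical.
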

\begin{proof}
Consider the inequality $ab\le a^p/p+b^q/q$ with
 $p^{-1}=\gamma(1+\gamma)^{-1}$ and $q^{-1} = (1+\gamma)^{-1}$.
Substituting $ {\varphi^{\frac{\gamma}{1+\gamma}}}{\alpha^{-\frac{\gamma}{2(1+\gamma)}}}$ with $a$
 and $(\gamma\psi)^{\frac{1}{1+\gamma}}\alpha^{\frac{1}{2(1+\gamma)}}$ with $b$,
it follows that
\[
\varphi^{\frac{\gamma}{1+\gamma}}(\gamma\psi)^{\frac{1}{1+\gamma}}\alpha^{\frac{1-\gamma}{2(1+\gamma)}}
\le
\frac{\gamma}{1+\gamma}\frac{\varphi + \alpha\psi}{\alpha^{\frac{1}{2}}}
=
\frac{\gamma}{1+\gamma}\frac{F(\alpha)}{\alpha^{\frac{1}{2}}}
\]
and the inequality holds if and only if $a^p = b^p$, namely,
$
\varphi(\alpha) - \gamma\alpha\psi(\alpha)=0.
$
Multiplying $\alpha^{-\frac{1-\gamma}{2(1+\gamma)}}$ and taking $1+\gamma$ power yields the desired inequality. 
\end{proof}
Figure \ref{fig: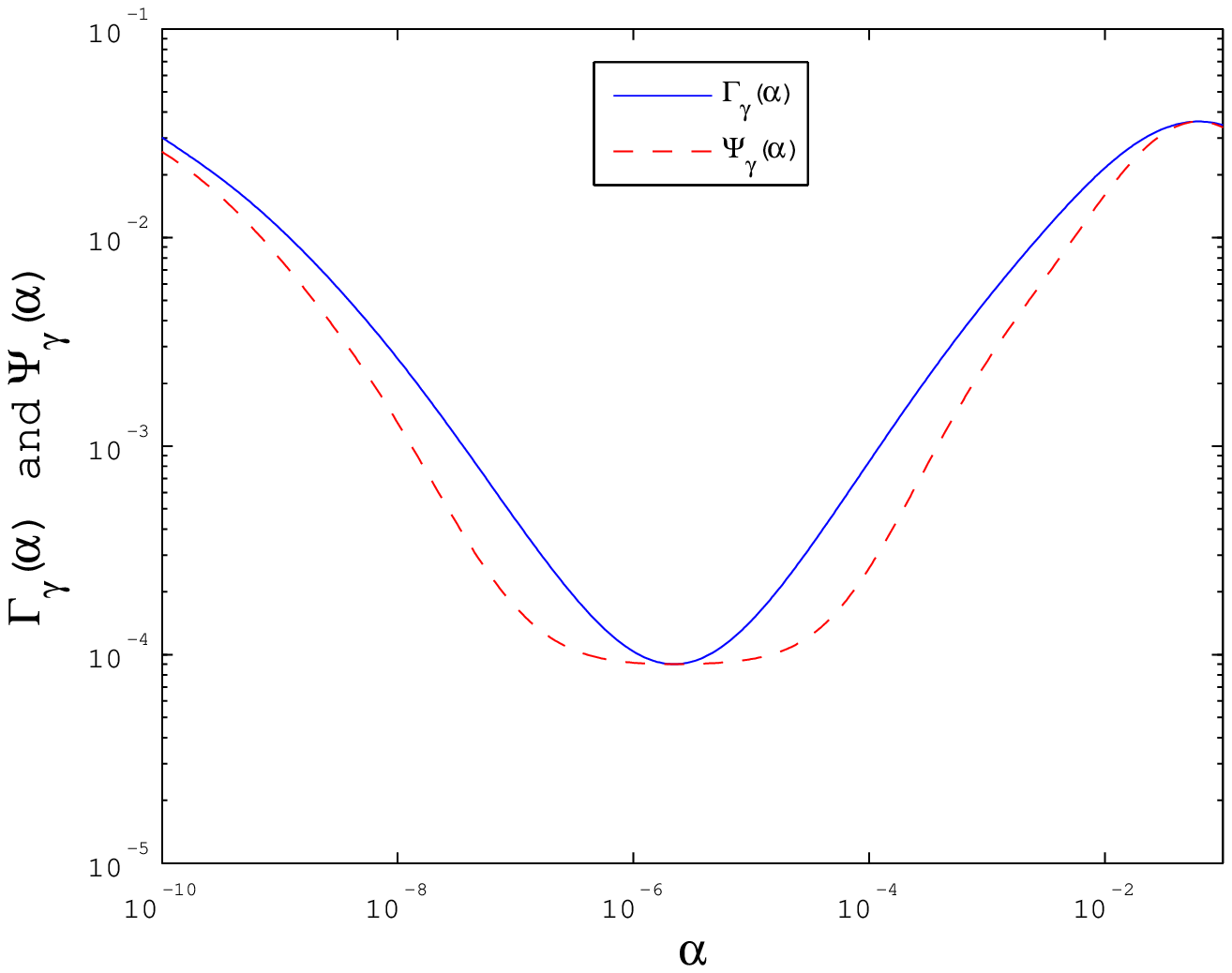} shows  $\Gamma_1(\alpha)$ and $\Psi_1(\alpha)$ in the interval $(10^{-10},10^{-1})$ for certain linear inverse problem.
\begin{figure}[h]
\begin{center}
 \includegraphics[width=.5\hsize]{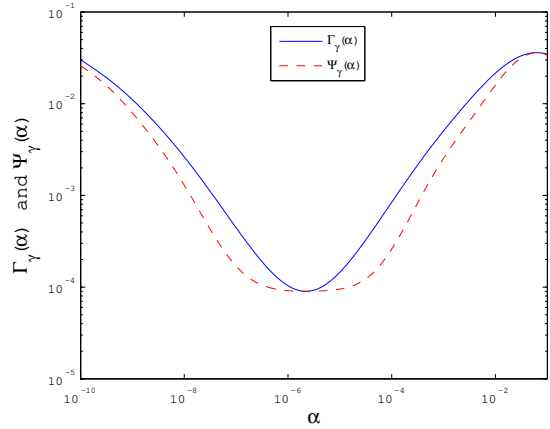}\\
    \caption{The graph of $\Gamma_1(\alpha)$ (solid line $-$) and $\Psi_1(\alpha)$ (dashed line $- -$)}\label{fig:Gamma.eps}
    \end{center}
\end{figure}
There exists a local minimum point around $\alpha=10^{-5}$ where $\Gamma_1(\alpha)$ and $\Psi_1(\alpha)$ take the same value.

The advantages of our criterion are (i) the shape of $\Gamma_\gamma(\alpha)$ is sharper than $\Psi_\gamma(\alpha)$ and thus it is easier to detect the minimum point. (ii) $\Gamma_\gamma(\alpha)$ contains $F(\alpha)$ only, does not require the function $F'(\alpha)$ which can be discontinuous due to the nonuniqueness of an inverse problem.

The effect of the parameter $\gamma$ in $\Gamma_{\gamma}$ to the quality of the solution $x_\alpha$ should be studied. We investigate both the applicability of the criterion to nonlinear problems and the effect of the parameter $\gamma$ in our future works.

\section{Conclusion}
We investigate the minimum value function for the Tikhonov regularization. We propose the model function for the minimum value function for linear inverse problems and verify its efficiency in the determination of the regularization parameter. We also propose a new criterion for the choice of the regularization parameter. Our criterion strongly relates to the minimum product criterion and is applicable to nonlinear inverse problems.

\bibliographystyle{plain} 
\def\cprime{$'$} \def\cprime{$'$} \def\cprime{$'$}
  \def\cydot{\leavevmode\raise.4ex\hbox{.}}
  \def\Dbar{\leavevmode\lower.6ex\hbox to 0pt{\hskip-.23ex \accent"16\hss}D}
  \def\cprime{$'$}

\end{document}